\newcounter{braid}
\newcounter{strands}
\DeclareMathAlphabet{\bsf}{OT1}{cmss}{bx}{n}
\def\cross{%
  \@ifnextchar^{\message{Got sup}\cross@sup}{\cross@sub}}
\def\cross@sup^#1_#2{\render@cross{#2}{#1}}
\def\cross@sub_#1{\@ifnextchar^{\cross@@sub{#1}}{\render@cross{#1}{1}}}
\def\cross@@sub#1^#2{\render@cross{#1}{#2}}
\def\render@cross#1#2{
  \def\strand{#1}
  \def\crossing{#2}
  \pgfmathsetmacro{\cross@y}{-\value{braid}*\braid@h}
  \pgfmathtruncatemacro{\nextstrand}{#1+1}
  \foreach \thread in {1,...,\value{strands}}
  {
    \pgfmathsetmacro{\strand@x}{\thread * \braid@w}
    \ifnum\thread=\strand
    \pgfmathsetmacro{\over@x}{\strand * \braid@w + .5*(1 - \crossing) * \braid@w}
    \pgfmathsetmacro{\under@x}{\strand * \braid@w + .5*(1 + \crossing) * \braid@w}
    \draw[braid] \pgfkeysvalueof{/tikz/braid start} +(\under@x pt,\cross@y pt) to[out=-90,in=90] +(\over@x pt,\cross@y pt -\braid@h);
    \draw[braid] \pgfkeysvalueof{/tikz/braid start} +(\over@x pt,\cross@y pt) to[out=-90,in=90] +(\under@x pt,\cross@y pt -\braid@h);
    \else
    \ifnum\thread=\nextstrand
    \else
     \draw[braid] \pgfkeysvalueof{/tikz/braid start} ++(\strand@x pt,\cross@y pt) -- ++(0,-\braid@h);
    \fi
   \fi
  }
  \stepcounter{braid}
}
\tikzset{braid/.style={double=\pgfkeysvalueof{/tikz/braid colour},double distance=1pt,line width=2pt,white}}
\newcommand{\braid}[2][]{%
  \begingroup
  \pgfkeys{/tikz/strands=2}
  \tikzset{#1}
  \pgfkeysgetvalue{/tikz/braid width}{\braid@w}
  \pgfkeysgetvalue{/tikz/braid height}{\braid@h}
  \setcounter{braid}{0}
  \let\sigma=\cross
  #2
  \endgroup
}
\newtheorem{theorem}{Theorem}
\newtheorem{proposition}[theorem]{Proposition}
\newtheorem{lemma}[theorem]{Lemma}
\newtheorem{example}[theorem]{Example}
\def\Z{\mathbb{Z}}
\def\C{\mathbb{C}}
\def\C{\mathbb{C}}
\def\qed{\hfill$\square$\medskip}
\def\Zpk{\mathbb{Z}/p^{k}}
\def\Zpk1{\mathbb{Z}/p^{k-1}}
\newcommand{\rref}[1]{(\ref{#1})}
\newcommand{\beg}[2]{\begin{equation}\label{#1}#2\end{equation}}
\def\sl2{\widetilde{SL_{2}(\Z)}}
\title[Equivariant dihedral cohomology]{On the $RO(G)$-graded coefficients of dihedral equivariant cohomology}
\author{Igor Kriz and Yunze Lu}
\thanks{Kriz acknowledges the support of a Simons Collaboration Grant.}
\begin{document}
\maketitle

\begin{abstract}
We completely calculate the $RO(G)$-graded coefficients of ordinary equivariant cohomology where 
$G$ is the dihedral group of order $2p$ for a prime $p>2$ both with constant and Burnside ring coefficients. 
The authors first proved it for $p=3$ and then the second author generalized it to arbitrary $p$.
These are the first such calculations for a non-abelian group.
\end{abstract}

\section{Introduction}

A 1982 Northwestern conference problem asked for a complete calculation of the $RO(G)$-graded cohomology groups 
of a point for a non-trivial finite group $G$ (see \cite{mm} for definitions and \cite{lms} for background). 
This question was quickly solved by Stong \cite{stong} for cyclic groups
$\Z/p$ with $p$ prime. Partial calculations for groups $\Z/(p^n)$ and $(\Z/p)^n$ were much more recently done 
in \cite{hhr,hk, hok, hok2, sk}. In a recent lecture, Peter May \cite{maytalk} emphasized the fact that 
no case of a non-abelian group was known to date. 

\vspace{3mm}
The purpose of this paper is to advance progress in the non-abelian direction by calculating the $RO(G)$-graded cohomology coefficients for $G=D_{2p}$, dihedral group with $2p$ elements for $p$ a prime number, with both Burnside ring $\underline{A}$ and constant $\underline{\Z}$ coefficients. The constant mackey functor $\underline{\Z}$ is obtained by taking the quotient of the Burnside ring Mackey functor $\underline{A}$ by its augmentation ideal. Burnside Mackey functor is universal among among ordinary $RO(G)$-graded cohomology theories in the same sense as $\Z$-coefficients are non-equivariantly (see \cite{greenlees}), and thus were of primary interest historically. However, for non-trivial groups, the Burnside ring is not a regular ring, and because of that, passage from Burnside ring to other coefficients is not immediate. In applications \cite{hkreal, hhr}, the use of constant coefficients, which are simpler, prevailed so far.

\vspace{3mm}
Our main tool is using an explicit $D_{2p}$-equivariant CW structure on representation spheres, which will be described in the next section. We will state the calculation with constant $\underline{\Z}$ coefficients here, and postpone the statement with the Burnside ring coefficients till Section \ref{sburn} below, as it essentially follows from the constant case after some algebraic examinations of the Burnside rings. 

\vspace{3mm}
We present $G=D_{2p}$ as $$\{\zeta,\tau \, |\, \zeta^p=1,\tau^2=1,\zeta\tau=\tau\zeta^{-1}\}.$$ 
The group $G$ has two one-dimensional representations: the trivial representation denoted by $\epsilon$ and the sign representation denoted by $\alpha$. The group
$G$ also admits $\frac{p-1}{2}$ two-dimensional representations, denoted by $\gamma_i$'s, given by $$\gamma_i: \zeta \mapsto \left[
 \begin{matrix}
   \cos(\frac{2\pi i}{p}) & -\sin(\frac{2\pi i}{p}) \\
   \sin(\frac{2\pi i}{p}) & \cos(\frac{2\pi i}{p}) 
  \end{matrix}
  \right], \tau \mapsto \left[
 \begin{matrix}
   1 & 0 \\
   0 & -1 
  \end{matrix}
  \right], 1\leq i \leq \frac{p-1}{2}.$$ 
We will prove a periodicity result that will exempt 
us from distinguishing different two-dimensional representations. Hence the cohomology could be indexed by $k\epsilon+\ell\alpha+m\gamma$.

\vspace{3mm}
To discuss the $\underline{\Z}$-coefficient case, it is useful to recall the following calculation due to Stong 
(see \cite{lewis,hkreal}). 
Denote, for $\ell\geq 0$,
 \beg{ebl}{B_\ell=\widetilde{H}_{*}^{D_{2p}}(S^{\ell\alpha},\underline{\Z})
=\widetilde{H}_{*}^{\Z/2}(S^{\ell\alpha},\underline{\Z}),}
\beg{eblc}{B^\ell=\widetilde{H}^*_{D_{2p}}(S^{\ell\alpha},\underline{\Z})
=\widetilde{H}^*_{\Z/2}(S^{\ell\alpha},\underline{\Z}).}

\begin{proposition}\label{plb}
Let $n$ denote the grading. We have $$B_{\ell,n} =\left\{\begin{array}{ll}\Z & \text{$n=\ell$ even}\\
\Z/2 & \text{$0\leq n<\ell$ even}\\
0 & \text{else,}
\end{array}\right.$$
$$B^{\ell,n} =\left\{\begin{array}{ll}\Z & \text{$n=\ell$ even}\\
\Z/2 & \text{$3\leq n\leq\ell$ odd}\\
0 & \text{else.}
\end{array}\right. $$
\end{proposition}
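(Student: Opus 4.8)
The plan is to reduce the computation to the cyclic group $\Z/2$ generated by $\tau$, since the first equalities in \rref{ebl} and \rref{eblc} assert that $\widetilde{H}^{D_{2p}}_*(S^{\ell\alpha};\underline{\Z})$ agrees with the $\Z/2$-equivariant homology of the same sphere. This reduction is the substance of the whole argument, and I expect it to be the main obstacle: one must check that restricting along the inclusion $\Z/2\hookrightarrow D_{2p}$ induces the claimed isomorphism. The cleanest way is to build an explicit $D_{2p}$-CW structure on $S^{\ell\alpha}$ in which every cell has isotropy containing the cyclic normal subgroup $\Z/p=\langle\zeta\rangle$; concretely, since $\zeta$ acts trivially on $\alpha$, the sphere $S^{\ell\alpha}$ is a $\Z/2$-space with $\Z/p$ acting trivially, so one can give it the $\Z/2$-CW structure with two fixed $0$-cells and one free $1$-cell in each dimension (the standard minimal CW structure on $S^{\ell\alpha}$ over $\Z/2$), and then inflate along $D_{2p}\twoheadrightarrow \Z/2$. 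Inflation of a $\Z/2$-CW structure gives a $D_{2p}$-CW structure whose cells are $D_{2p}/(\Z/p)$ and $D_{2p}/(\text{trivial})$, i.e. $D_{2p}/\langle\zeta\rangle$ and $D_{2p}/e$; evaluating the constant Mackey functor $\underline{\Z}$ on these orbits, together with the transfer and restriction maps, yields exactly the same chain complex as the $\Z/2$-CW structure evaluated on $\underline{\Z}$ (because $\underline{\Z}(D_{2p}/\langle\zeta\rangle)=\Z$ with the index-$2$ data behaving as for $\Z/2$, and $\underline{\Z}(D_{2p}/e)=\Z$). This gives the first equality; after that the problem is purely a $\Z/2$-computation.

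For the $\Z/2$-computation I would use the standard cofiber-sequence bootstrap. There is a cofiber sequence $S(\ell\alpha)_+\to S^0\to S^{\ell\alpha}$, or inductively $S^{(\ell-1)\alpha}\wedge S^0 \to S^{(\ell-1)\alpha}\wedge S^\alpha_? $; more directly, smashing the cofiber sequence $(\Z/2)_+\to S^0\to S^\alpha$ with $S^{(\ell-1)\alpha}$ gives
\begin{equation}
(\Z/2)_+\wedge S^{(\ell-1)\alpha}\longrightarrow S^{(\ell-1)\alpha}\longrightarrow S^{\ell\alpha},
\end{equation}
and since $(\Z/2)_+\wedge S^{(\ell-1)\alpha}\simeq (\Z/2)_+\wedge S^{\ell-1}$ has free $\Z/2$-action, its equivariant homology with $\underline{\Z}$ coefficients is just $\Z$ concentrated in degree $\ell-1$. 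Feeding this into the long exact sequence in $\widetilde{H}^{\Z/2}_*(-;\underline{\Z})$ and inducting on $\ell$ (base case $\ell=0$: $\widetilde{H}^{\Z/2}_*(S^0;\underline{\Z})=\underline{\Z}(\Z/2/\Z/2)=\Z$ in degree $0$) produces the stated pattern: the free summand $\Z$ persists in the top degree $n=\ell$, and each step introduces one new $\Z/2$ in the next-lowest even degree via the connecting map, which is multiplication by $2$ (the composite $\Z=\widetilde H_{\ell-1}\to \widetilde H_{\ell-1}$ coming from transfer–restriction on the free cell). One must check the parity bookkeeping — that all the torsion lands in even degrees in homology and in odd degrees $\geq 3$ in cohomology — which follows from tracking that the generator in degree $\ell$ is the orientation class and the attaching maps alternate.

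Finally, the cohomology statement $B^{\ell,n}$ follows either by the analogous induction using the cofiber sequence in $\widetilde{H}^*_{\Z/2}(-;\underline{\Z})$, or by a universal-coefficient/duality argument: $S^{\ell\alpha}$ is $\Z/2$-equivariantly self-dual up to a shift, so $\widetilde{H}^*_{\Z/2}(S^{\ell\alpha};\underline{\Z})$ is computed from $\widetilde{H}_*^{\Z/2}(S^{\ell\alpha};\underline{\Z})$ with a degree shift by $\ell$ and an $\mathrm{Ext}$ correction that turns each $\Z/2$ in homological degree $n$ into a $\Z/2$ in cohomological degree $n+1$. Combining with the degree shift explains why the torsion range $0\le n<\ell$ even in homology becomes $3\le n\le \ell$ odd in cohomology (the smallest homological torsion degree is $n=2$ — there is no $\Z/2$ in degree $0$ once $\ell>0$ because $\widetilde H_0=0$ there — so the smallest cohomological torsion degree is $3$), and the free class moves from degree $\ell$ to degree $\ell$. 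I expect no difficulty here beyond carefully checking these edge cases at small $\ell$; the genuine content, as noted, is the inflation argument identifying the $D_{2p}$-computation with the $\Z/2$-one.
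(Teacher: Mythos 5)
The paper offers no proof of Proposition \ref{plb}: it is recalled as a classical computation of Stong and cited via \cite{lewis,hkreal}, with the statement followed immediately by an end-of-proof mark. So you are supplying an argument where the paper supplies none. Your reduction to $\Z/2$ and your homology induction are essentially sound, modulo two slips: the inflation of the two fixed $0$-cells of the $\Z/2$-CW structure gives orbits $D_{2p}/D_{2p}$, not $D_{2p}/e$ (a free orbit would contradict your own correct observation that every cell has isotropy containing $\langle\zeta\rangle$); and the multiplication-by-$2$ map in your long exact sequence is induced by the collapse $(\Z/2)_+\wedge S^{(\ell-1)\alpha}\to S^{(\ell-1)\alpha}$ (a transfer on the top class), not by the connecting homomorphism. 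With those repairs the induction does yield $B_{\ell,n}$ as stated.

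The genuine gap is in the cohomology half. First, $S^{\ell\alpha}$ is not $\Z/2$-equivariantly self-dual up to a shift: its Spanier--Whitehead dual is $S^{-\ell\alpha}$, and that duality is precisely what the paper uses to \emph{define} $B^{\ell,n}$ for $\ell<0$, not to compute it for $\ell>0$. Second, no universal-coefficient argument converts $B_{\ell,*}$ into $B^{\ell,*}$: Bredon homology and cohomology with Mackey-functor coefficients are built from the covariant and contravariant structure of $\underline{\Z}$ (transfers versus restrictions), not by dualizing a single integral chain complex, and the two answers here are visibly not $\mathrm{Ext}$-duals of one another. Your attempted patch --- that there is no torsion in degree $0$ because ``$\widetilde{H}_0=0$ there'' --- contradicts the statement you are proving, since $B_{\ell,0}=\Z/2$ for every $\ell>0$. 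The correct route is the one you mention only in passing: run the same cofiber-sequence induction in cohomology. There the relevant map is a restriction rather than a transfer; on the bottom class, $\widetilde{H}^0_{\Z/2}(S^0)\to\widetilde{H}^0_{\Z/2}((\Z/2)_+)$ is an isomorphism for constant coefficients (it is the identity of $\Z$), which kills $\widetilde{H}^0$ and $\widetilde{H}^1$ of $S^{\alpha}$ outright, while on the top class of $S^{(\ell-1)\alpha}$ for $\ell-1>0$ even (carried by a free cell) the restriction is multiplication by $2$, producing the $\Z/2$'s in odd degrees between $3$ and $\ell$. This asymmetry between restriction and transfer at the fixed cells --- not a degree shift from $\mathrm{Ext}$ --- is why the torsion ranges in $B_{\ell,*}$ and $B^{\ell,*}$ differ, and it is the step your proposal is missing.
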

\qed

\vspace{3mm}
We also put $$B_{\ell,n}=B^{-\ell,-n}, \; B^{\ell,n}=B_{-\ell,-n}\;\text{for $\ell<0$}.$$
Then \rref{ebl} and \rref{eblc} extend to $\ell<0$ by Spanier-Whitehead duality.

Now define ${}_sA_t$ and ${}^sA^t$ by
\beg{eal}{{}(_sA_t)_n=\left\{\begin{array}{ll}
\Z/p & \text{when $2s<n<2t-1$, $n\equiv 3\mod 4$},\\
0 & \text{else,}
\end{array}
\right.
}
\beg{eal2}{{}(^sA^t)^n=\left\{\begin{array}{ll}
\Z/p & \text{when $2s<n<2t-1$, $n\equiv 0\mod 4$},\\
0 & \text{else.}
\end{array}
\right.
}

Our main result, the $RO(G)$-graded (co)homology of a point with coefficients in $\underline{\Z}$ is given by the following

\begin{theorem} \label{t1} For $m>0$, we have
\beg{e1t1}{H^{D_{2p}}_{*}(S^{m\gamma+\ell\alpha},\underline{\Z})={}_{\ell-1}A_{\ell+m}[-\ell+1]
\oplus B_{\ell+m}[m],
}
\beg{e1t2}{H_{D_{2p}}^{*}(S^{m\gamma+\ell\alpha},\underline{\Z})={}^{\ell}A^{\ell+m}[-\ell+1]
\oplus B^{\ell+m}[m].
}
\end{theorem}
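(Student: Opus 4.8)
The plan is to build an explicit $D_{2p}$-equivariant CW structure on the representation sphere $S^{m\gamma+\ell\alpha}$ and to run the resulting cellular chain complex, computing its homology directly. First I would set up the CW structure on $S(m\gamma)$, the unit sphere in $m$ copies of a fixed two-dimensional representation $\gamma$: this is an odd-dimensional sphere with a free $\Z/p$-action (coming from the rotation $\zeta$) together with the reflection $\tau$, so it is a $D_{2p}$-space whose cells, away from the fixed loci, come in free $D_{2p}$-orbits and along the $\tau$-fixed circles come in orbits of type $D_{2p}/\langle\tau\rangle$. I would choose the cells so that the attaching maps are standard (modeled on the usual minimal $\Z/p$-CW structure on an odd sphere, symmetrized with respect to $\tau$), and then take the join / smash with the known $\Z/2$-CW structure on $S^{\ell\alpha}$ (which underlies Proposition \ref{plb}). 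Because $\tau$ acts on $\gamma$ with both $+1$ and $-1$ eigenvalues, the $\tau$-fixed sphere inside $S^{m\gamma+\ell\alpha}$ is $S^{m+\ell\alpha}$ as a $\Z/2 = \langle\tau\rangle$-space (here I am using $\langle\zeta\rangle$ acts trivially on the fixed set), and the $\langle\zeta\rangle$-fixed sphere is just $S^{\ell\alpha}$ with residual $\Z/2$-action; these identifications are what will make the two summands in \eqref{e1t1}–\eqref{e1t2} appear.

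Next I would write down the Bredon chain complex $\underline{C}_*$ with $\underline{\Z}$-coefficients. The free $D_{2p}$-cells contribute copies of $\underline{\Z}(D_{2p}/e)$, which have trivial Bredon homology except in the top, and more importantly contribute the "$\Z/p$" torsion: the differential between consecutive free cells is multiplication by $1+\zeta+\cdots+\zeta^{p-1}$ or by $1-\zeta$ after evaluating $\underline{\Z}$ at $D_{2p}/e$, and on the fixed-point (i.e. $D_{2p}/D_{2p}$ and $D_{2p}/\langle\tau\rangle$) levels these maps become multiplication by $p$ or by $0$. This is exactly the mechanism that produces the groups ${}_sA_t$ and ${}^sA^t$, which are concentrated in degrees $\equiv 3 \pmod 4$ (homology) resp. $\equiv 0 \pmod 4$ (cohomology) — the degree-$4$ periodicity reflects the length-$4$ pattern $\Z/p \to \Z/p \to \cdots$ coming from alternating $1-\zeta$ and the norm in the free part of $S(m\gamma)$, combined with the sign twist from $\alpha$. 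The cells sitting on the $\tau$-fixed circles, together with the $S^{\ell\alpha}$-direction, contribute precisely the complex computing $B_{\ell+m}$ resp. $B^{\ell+m}$, shifted by $[m]$ because the fixed sphere sits in codimension $m$ inside $S^{m\gamma+\ell\alpha}$ after accounting for the $\tau$-eigenvalue pattern; here I would cite Proposition \ref{plb} verbatim. I would then check that the chain complex splits as a direct sum of these two sub/quotient complexes — this is plausible because the free part and the $\langle\zeta\rangle$-fixed part involve disjoint sets of cells and the connecting differentials between them vanish on Bredon homology — giving the claimed direct-sum decomposition, with the degree shifts $[-\ell+1]$ and $[m]$ determined by bookkeeping of cell dimensions.

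Finally I would establish the periodicity in the $\gamma_i$ that the introduction promises, so that writing $m\gamma$ (rather than $\sum m_i\gamma_i$) is justified: for any two-dimensional irreducible $\gamma_i$, the restriction to $\langle\zeta\rangle=\Z/p$ of $S^{\gamma_i}$ is $\Z/p$-homotopy equivalent independently of $i$ (all nontrivial characters of $\Z/p$ give $\Z/p$-equivalent representation spheres after we are allowed to permute, which is invisible to Bredon cohomology), and the $\tau$-fixed data is the same for all $i$; so the CW model and its chain complex are independent of the choice, and the answer depends only on $m$. I expect the main obstacle to be the careful construction of the $D_{2p}$-CW structure on $S(m\gamma)$ and the verification that the cellular differentials are exactly the stated maps $1-\zeta$ and the norm in the right alternating pattern — equivalently, pinning down the signs and the precise degrees in which the orbit types $D_{2p}/e$, $D_{2p}/\langle\tau\rangle$, $D_{2p}/D_{2p}$ occur, since an off-by-one there would shift $A$ into the wrong residue class mod $4$. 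Once the chain complex is correctly identified, extracting its homology and matching it to ${}_{\ell-1}A_{\ell+m}[-\ell+1]\oplus B_{\ell+m}[m]$ (and the cohomological analogue) is a finite, if somewhat intricate, computation, and extending to $\ell<0$ follows by Spanier–Whitehead duality exactly as in the paragraph preceding the theorem.
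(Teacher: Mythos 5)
Your overall strategy (equivariant CW structure on the unit sphere $S(m\gamma)$, Bredon chain complex, the free orbits producing the $\Z/p$-groups ${}_sA_t$ and the $\tau$-isotropy cells producing $B_{\ell+m}$) matches the paper's for $\ell\geq 0$, but there is a genuine gap at the step you defer to the end: the case $\ell<0$. You propose to handle it ``by Spanier--Whitehead duality exactly as in the paragraph preceding the theorem,'' but that paragraph only extends the one-variable groups $B_\ell$; for the theorem itself, duality sends $S^{m\gamma+\ell\alpha}$ to $S^{-m\gamma-\ell\alpha}$, which has \emph{negative} multiplicity of $\gamma$ and so is not covered by the statement for $m>0$. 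You cannot flip the sign of $\ell$ without also flipping the sign of $m$. The paper explicitly flags this as the point where the naive cellular approach fails: a Bredon chain complex does not exist for spectra obtained by desuspending by a nontrivial representation. Its workaround is the technical heart of the argument: one keeps the filtration of $S(m\gamma)$ by the subspheres $S^{k+(k+1)\alpha}$, suspends it by $S^{\ell\alpha}$ for arbitrary $\ell\in\Z$, and identifies the $d^1$ of the resulting spectral sequence as a \emph{stable} $D_{2p}$-map $D_{2p}/(\Z/2)_+\to D_{2p}/(\Z/2)_+$, which by adjunction and the Wirthm\"uller isomorphism is an element of $A(\Z/2)\oplus\Z^{\oplus(p-1)/2}$; reading it off from the attaching map $da_{k,k-1}$ gives $(1,1,\dots,1)$, i.e.\ multiplication by $p$ on $\underline{\Z}$, independently of $\ell$ and $k$. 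Without some such stable identification of the connecting maps, your chain complex simply is not defined for $\ell<0$.

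Two smaller points. First, your claim that the chain complex ``splits as a direct sum'' of the free and fixed parts is asserted rather than proved; in the paper the decomposition is extracted from the collapse of the spectral sequence at $E^2$ together with the long exact sequence of the cofibration $\Sigma^{\ell\alpha}S(m\gamma)_+\to S^{\ell\alpha}\to S^{\ell\alpha+m\gamma}$, where the map $B_\ell\to B_\ell$ is the transfer (multiplication by $p$), an isomorphism except in the top degree for $\ell$ even --- that exceptional degree is exactly what shifts ${}_\ell A$ to ${}_{\ell-1}A$ in \rref{e1t1}, so the extension analysis cannot be skipped. Second, your justification of independence of $\gamma_i$ is not correct as stated: the spheres $S^{\gamma_i}$ are \emph{not} $\Z/p$-equivariantly homotopy equivalent for different $i$ (equivariant degrees obstruct this). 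What is true, and what the paper proves, is that the Mackey-functor-valued cellular chain complexes are isomorphic, and this isomorphism --- which is not induced by any equivariant map of spaces --- is promoted to an equivalence of $H\underline{M}$-module spectra via a functorial Eilenberg--MacLane construction on chain complexes of Mackey functors. Some argument of this kind is needed before one may write $m\gamma$ unambiguously.
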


Here $[k]$ denotes shift up by $k$ in homology or cohomology. Note that since it is often appropriate to identify the cohomological grading with the nagative of homological, some authors prefer to define shifts in one grading only; in that case, there would be a negative sign in the square brackets of one of the formulas \rref{e1t1}, \rref{e1t2}. 

\vspace{3mm}

Theorem \ref{t1} and Proposition \ref{plb} give a complete calculation of the $RO(G)$-graded cohomology of a point with $\underline{\Z}$ coefficients. We will prove Theorem \ref{t1} in Section \ref{sp1}, \ref{sp2} below, and give the discussion of Burnside ring coefficients in Section \ref{sburn}.

\vspace{3mm}

\vspace{3mm}

\section{Equivariant CW-structure and periodicity}\label{sp1}

We will write $G=D_{2p}$ from now on. 
By abuse of notation, in addition to the generator of $D_{2p}$,
$\tau$ will also denote complex conjugation.
Also, we shall write $\gamma=\gamma_1$.

Let $S(m\gamma_i)$ be the unit sphere of the representation $m\gamma_i$. In this section we will construct a $D_{2p}$-equivariant CW structure on each $S(m\gamma_i)$. By computing the associated Mackey functor-valued equivariant chain complexes (meaning the Mackey functor-valued 
chain complexes given by the fixed points of the cellular chain complex
of the equivariant CW-complex with respect to subgroups) for different $\gamma_i$'s, we prove that instead of indexing on all $\gamma_i$'s, it suffices to consider only $\gamma$.

\vspace{3mm}

The CW structure is obtained by subdividing the standard $\mathbb{Z}/p$-equivariant cells of $S(m\gamma_i)$. We will identify the nonequivariant underlying spaces of all $S(m\gamma_i)$'s with subsets of $\mathbb{C}^m$
(by identufying each copy of $\gamma_i$ with a copy of $\C$). Then for $S(m\gamma_i)$, $\zeta\in G$ simply acts by coordinate-wise $\zeta^i_p$ multiplication where $\zeta_p=e^{2\pi i/p}$. In this context we will see $S(m\gamma_i)$'s share exactly the same CW decomposition non-equivariantly, with different $D_{2p}$-actions. 

First observe that the
usual free $\Z/p$-equivariant CW-sructure on $S(m\gamma_i)$ has equivariant
cells freely generated by the following non-equivariant cells for $1\leq k \leq m$: 
\beg{ez3c1}{\{(z_1,...,z_k,0,...,0)\in S(m\gamma_i)\mid z_k\in [0,1]\},
}
\beg{ez3c2}{\{(z_1,...,z_k,0,...,0)\in S(m\gamma_i)\mid z_k\in [0,1]\cdot e^{\lambda i},\;
(p-1)\pi/p\leq \lambda \leq (p+1)\pi/p\}.
}
Though both \rref{ez3c1} and \rref{ez3c2} are stable under the action of $\tau$, they are not $D_{2p}$-cells since $\tau$ acts non-trivially on them, and not all points of each corresponding open cell have
the same isotropy. 
However it is worth noting 
that they can be identified with unit disks of the representations \beg{ez3c3}{(k-1)\alpha+(k-1)\epsilon,\; k\alpha +(k-1)\epsilon,
}
respectively. This gives a guide on how to subdivide them into $D_{2p}$-equivariant cells. To be precise, we consider the following cells for $S(m\gamma_i)$:

\leftline{\textbf{Type A.}} $$a_{k,\ell}, \, 0\leq \ell \leq k-1, 1\leq k \leq m,$$ generated by $$\{(z_1,...,z_k,0,...,0) \in S(m\gamma_i) \,|\, \mathrm{Im}(z_\ell)\geq 0, z_{\ell+1},...,z_{k-1} \in [-1,1], z_k \in [0,1] \}.$$ The cell $a_{k,\ell}$ has dimension $k+\ell-1$  and has isotropy $\mathbb{Z}/2$ for $\ell=0$ and $\{e\}$ for $\ell>0$.

\leftline{\textbf{Type B.}} $$b_{k,\ell}, \, 0\leq \ell \leq k-1, 1\leq k \leq m,$$ generated by $$\{(z_1,...,z_k,0,...,0) \in S(m\gamma_i) \,|\, \mathrm{Im}(z_\ell)\geq 0, z_{\ell+1},...,z_{k-1} \in [-1,1], z_k \in [-1,0] \}.$$ 
Since it is symmetric to $a_{k,\ell}$, the cell $b_{k,\ell}$ has dimension $k+\ell-1$  and has isotropy $\mathbb{Z}/2$ for $\ell=0$ and $\{e\}$ for $\ell>0$. 

\leftline{\textbf{Type C.}} $$c_k, \, 1\leq k \leq m,$$ generated by $$\{(z_1,...,z_k,0,...,0) \in S(m\gamma_i) \,|\, z_k\in [0,1] \cdot e^{i\lambda}, 0\leq \lambda \leq \pi/p \}.$$ The cell $c_k$ has dimension $2k-1$  and has isotropy $\{e\}$.

It is straightforward to check that these cells give a $D_{2p}$-equivarant CW decomposition for each $S(m\gamma_i)$, only with different $D_{2p}$-actions for different $S(m\gamma_i)$'s. 

\vspace{3mm}

Based on the equivariant CW-structure, we are ready to write down the differentials. Note that 
the CW-structure is
regular: the boundaries of cells attach by homeomorphic embeddings, and 
hence the nonzero coefficients of the differentials
will always be either 1 or $-1$. We orient all cells as submanifolds (with corners) 
of the complex vector space $\mathbb{C}^m$. The induced orientation of the boundary of a cell is chosen by the following rule: the induced orientation followed by the outward normal direction together make up the standard orientation of 
$\C^m$. 
For example, the induced orientation of $S^1 \subset \mathbb{C}$ is going clockwise, hence the incidence number between $a_{2,1}$ and $c_1$ is $-1$. 

\begin{lemma}\label{lbred} Given $1\leq i\leq (p-1)/2$, let $1\leq j \leq p-1$ be the multiplicative inverse of $i$. Let $\zeta_i=\zeta^{j}$. With respect to the CW-structure and orientations described above, the $D_{2p}$-equivariant cell chain complex of $S(m\gamma_i)$ in the sense of Bredon \cite{bredon} has differential

$da_{1,0}=0$

$db_{1,0}=0$

$dc_1=\zeta_i^{\frac{p+1}{2}}b_{1,0}-a_{1,0}$

$da_{2,1}=-a_{2,0}-(1+\zeta_i+...+\zeta_i^{\frac{p-1}{2}})c_1+(\zeta_i+...+\zeta_i^{\frac{p-1}{2}})\tau c_1$

$db_{2,1}=-b_{2,0}-(1+\zeta_i+...+\zeta_i^{\frac{p-1}{2}})c_1+(\zeta_i+...+\zeta_i^{\frac{p-1}{2}})\tau c_1$

$da_{k,0}=a_{k-1,0}-b_{k-1,0} \qquad k>1$

$db_{k,0}=a_{k-1,0}-b_{k-1,0} \qquad k>1$

$da_{k,1}=a_{k-1,1}-b_{k-1,1}+(-1)^{k-1}a_{k,0} \qquad k>2$

$db_{k,1}=a_{k-1,1}-b_{k-1,1}+(-1)^{k-1}b_{k,0} \qquad k>2$

\vspace{2mm} 

\leftline{\rm{For} $k>3,\, 1<\ell<k-1$,}

$da_{k,\ell}=a_{k-1,\ell}-b_{k-1,\ell}+(-1)^{k-\ell}a_{k,\ell-1}+(-1)^{k-1}\tau a_{k,\ell-1}$

$db_{k,\ell}=a_{k-1,\ell}-b_{k-1,\ell}+(-1)^{k-\ell}b_{k,\ell-1}+(-1)^{k-1}\tau b_{k,\ell-1}$

\vspace{2mm}

\leftline{\rm{For} $k>2$, by abbreviating the action of $\sum_{j=1}^{(p-1)/2}\zeta_i^j$ to $\sigma$,}

$da_{k,k-1}=-a_{k,k-2}+(-1)^{k-1}\tau a_{k,k-2}-(1+\sigma)c_{k-1}+(-1)^{k-2}\sigma\tau c_{k-1}$

$db_{k,k-1}=-b_{k,k-2}+(-1)^{k-1}\tau b_{k,k-2}-(1+\sigma)c_{k-1}+(-1)^{k-2}\sigma\tau c_{k-1}$

\vspace{2mm}

\leftline{\rm{Finally, for} $k>1$, }

$dc_{k}=-a_{k,k-1}+(-1)^k\tau a_{k,k-1}+\zeta_i^{\frac{p+1}{2}}b_{k,k-1}+(-1)^{k-1}\zeta_i^{\frac{p+1}{2}}\tau b_{k,k-1}.$

\end{lemma}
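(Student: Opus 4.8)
The plan is to compute the Bredon chain complex directly from the equivariant CW structure already constructed, working one cell type at a time and exploiting regularity of the CW structure so that every incidence number is $0$, $1$, or $-1$. First I would set up coordinates carefully: since $\zeta$ acts on $S(m\gamma_i)$ by coordinatewise multiplication by $\zeta_p^i$, the natural $\Z/p$-generator for the free part of each cell is $\zeta^j$ where $j$ is the multiplicative inverse of $i$ mod $p$, which is why $\zeta_i=\zeta^j$ appears; with this normalization the differentials should take a form independent of $i$ except through the substitution $\zeta\mapsto\zeta_i$. The bookkeeping for orientations is fixed by the stated convention (induced orientation followed by outward normal gives the standard orientation of $\C^m$), so each incidence number is determined once one draws the relevant picture; the sample computation $\langle a_{2,1},c_1\rangle=-1$ is the prototype.

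The key steps, in order: (1) Handle the base cases $k=1$ and $k=2$ by hand. For $k=1$ the $1$-skeleton in the first coordinate is a circle, subdivided into the arcs $a_{1,0}$, $b_{1,0}$ (fixed by $\tau$, isotropy $\Z/2$) and the free translates of $c_1$; tracking which endpoint of $c_1$ is which, and how $\tau$ swaps the two halves of the circle, gives $dc_1=\zeta_i^{(p+1)/2}b_{1,0}-a_{1,0}$ and $da_{1,0}=db_{1,0}=0$. The factor $\zeta_i^{(p+1)/2}$ records that $\tau$-reflection composed with the fixed-point normalization lands $c_1$'s far endpoint on a specific $\zeta_i$-translate of $b_{1,0}$. (2) For $k=2$ compute $da_{2,1}$, $db_{2,1}$ and $dc_2$; here the geometric boundary of $a_{2,1}$ consists of a piece in the lower-dimensional skeleton ($\pm a_{2,0}$), together with the arcs where $|z_2|=1$, which decompose into $\zeta_i$-translates of $c_1$ and $\tau c_1$ — producing the sums $1+\zeta_i+\dots+\zeta_i^{(p-1)/2}$ and $\zeta_i+\dots+\zeta_i^{(p-1)/2}$. (3) For the generic range $k>3$, $1<\ell<k-1$, observe that the cell $a_{k,\ell}$ is (after collapsing the $z_k\in[0,1]$ factor) a product of an interval-with-$\Z/2$-action in the $z_\ell$ slot with cubes in the $z_{\ell+1},\dots,z_{k-1}$ slots, so its boundary splits into: the two ends of the $z_\ell$-interval, which are $a_{k,\ell-1}$ and its $\tau$-translate $\tau a_{k,\ell-1}$ (with signs $(-1)^{k-\ell}$ and $(-1)^{k-1}$ coming from the position of the $z_\ell$ coordinate and the outward-normal rule); and the faces in the $z_{\ell+1},\dots,z_{k-1},z_k$ directions, which together give $a_{k-1,\ell}-b_{k-1,\ell}$ (the $z_k=0$ face minus/plus the rest). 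One then does the analogous analysis for the edge cases $\ell=0$, $\ell=1$, $\ell=k-1$, and for $dc_k$, where the $c$-cell's boundary meets the top cells $a_{k,k-1}$, $b_{k,k-1}$ and their $\tau$-translates.

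The main obstacle I expect is the sign and $\zeta_i$-exponent bookkeeping in the \textbf{Type C} differentials and in the $\ell=k-1$ row: these involve simultaneously the outward-normal orientation convention, the $\tau$-action (which is orientation-reversing on some coordinates), and the identification of where the attaching map of $c_k$ lands among the $p$ free translates of the boundary cells. Getting $\zeta_i^{(p+1)/2}$ (rather than some other power) and the alternating signs $(-1)^k$, $(-1)^{k-1}$ exactly right requires a consistent choice of base representative for each free orbit and careful tracking through the subdivision; I would verify internal consistency by checking $d^2=0$ in low degrees (e.g. $d^2 a_{2,1}=0$, $d^2 c_2=0$) before trusting the general formulas. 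Once the formulas are pinned down, extending them from $\gamma=\gamma_1$ to general $\gamma_i$ is immediate from the $\zeta\mapsto\zeta_i$ substitution, since nonequivariantly all the $S(m\gamma_i)$ carry literally the same CW decomposition.
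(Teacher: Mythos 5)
Your proposal follows essentially the same route as the paper: a direct geometric computation of incidence numbers from the explicit cell decomposition, using the stated outward-normal orientation convention and the regularity of the CW structure, with the $\zeta_i=\zeta^j$ normalization making the answer uniform in $i$. The paper's own proof just carries out one representative case ($da_{k,k-1}$, comparing the orientation bases of $a_{k,k-1}$ and $\tau a_{k,k-2}$ to extract the sign $(-1)^{k-1}$) and leaves the remaining cases to the same direct inspection you describe.
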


\begin{proof}
We present here a computation for the differential of $a_{k,k-1}$ for $k>2$. By equivariance, it suffices to work on the generator, which is given by
$$\{(z_1,...,z_k,0,...,0) \in S(m\gamma_i) \,|\, \mathrm{Im}(z_{k-1})\geq 0,  z_k \in [0,1] \}.$$
Note that $z_k$ is uniquely determined by the values of $z_1,...,z_{k-1}$, and the dimension of the cell is $2k-2$. Hence we only need to consider cells of dimension $2k-3$ to which $a_{k,k-1}$ attaches. They are precisely those cells with $z_{k-1}$ coordinates lying on the boundary of $a_{k,k-1}$, namely, $$a_{k,k-2},\tau a_{k,k-2},\,c_{k-1},\zeta_i c_{k-1},...,\zeta_i^{(p-1)/2} c_{k-1} , \zeta_i \tau c_{k-1},...,\zeta_i^{(p-1)/2} \tau c_{k-1}.$$Here cells in the orbit of $c_{k-1}$ are those with $\text{Im} (z_{k-1})\geq 0$.

It remains to determine the incidence numbers between $a_{k,k-1}$ and these cells. By the rule set above, we could use
the basis
\beg{or1}{( e_1,ie_1,e_2,ie_2,...,e_{k-1},ie_{k-1})}
to determine the orientation of $a_{k,k-1}$, and the orientation of $\tau a_{k,k-2}$ could be described by
\beg{or2}{( e_1,-ie_1,e_2,-ie_2,...,e_{k-2},-ie_{k-2},e_{k-1}).}
On a point of $\tau a_{k,k-2}$ that $a_{k,k-1}$ attaches, the induced orientation is given by
\beg{or3}{( e_1,ie_1,...,e_{k-2},ie_{k-2},-e_{k-1})}
since juxtaposing with outward normal direction $-ie_{k-1}$ gives the same orientation as \rref{or1}. It is straightforward to compare orientations \rref{or2} and \rref{or3} and this gives the sign $$d a_{k,k-1} =...+ (-1)^{k-1}\tau a_{k,k-2}+...$$ in the formula. All the other computations follow by direct inspection in a similar way.
\end{proof}

\vspace{3mm}

Since $S^{m\gamma_i}$ is the unreduced suspension of $S(m\gamma_i)$, the $D_{2p}$-equivariant CW structure of $S^{m\gamma_i}$ is easily derived.

\vspace{3mm}

We will next prove that the choice of two-dimensional representation $\gamma_i$ doesn't matter in the computation
of ordinary equivariant cohomology. Let $\underline{A}$ denote the Burnside ring Green functor 
(see \cite{greenlees}).

\begin{proposition}\label{prop22} Let $\underline{M}$ be
a $D_{2p}$-Mackey functor.
The $D_{2p}$-stable homotopy type of the $H\underline{{A}}$-module spectrum $H\underline{{M}} \wedge S^{\gamma_i}$ does not depend on the choice of $i$.
\end{proposition}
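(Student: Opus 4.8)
The plan is to reduce the statement to a purely algebraic comparison of the Bredon chain complexes and then upgrade it to a stable equivalence of $H\underline{A}$-module spectra. First I would observe that $S^{\gamma_i}$ is the unreduced suspension of $S(\gamma_i)$, and $S(\gamma_i)$ is a circle on which $\zeta$ acts by rotation through $2\pi i/p$ and $\tau$ acts by reflection; as a $D_{2p}$-space it carries the CW-structure described above with cells $a_{1,0}, b_{1,0}, c_1, \zeta_i c_1,\dots$. The underlying nonequivariant CW-complex, the cellular $\Z$-chain complex, and even the $\Z/2$-fixed-point data are literally identical for all $i$; what changes is only how the cyclic generator $\zeta$ permutes the top cells, and this is governed by the reindexing $\zeta_i = \zeta^{j}$ where $j = i^{-1} \bmod p$, exactly as recorded in Lemma \ref{lbred}. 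Since conjugation by a group automorphism does not change the $D_{2p}$-stable homotopy type, I would make this precise: the automorphism $\phi_j$ of $D_{2p}$ sending $\zeta \mapsto \zeta^{j}$, $\tau \mapsto \tau$ carries the standard $D_{2p}$-action on $S(\gamma)$ to the $D_{2p}$-action on $S(\gamma_i)$ (this is just the observation that $\gamma_i \cong \phi_j^{*}\gamma$ as representations, which one checks on the explicit matrices). Restriction of scalars along $\phi_j$ is an equivalence of the stable homotopy category which fixes $H\underline{A}$ (because $\underline{A}$, being the Burnside ring Mackey functor, is canonically invariant under every automorphism of $G$) and sends $H\underline{M}$ to $H\,\phi_j^{*}\underline{M}$.

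The subtlety is that $\phi_j^{*}\underline{M}$ is generally \emph{not} $\underline{M}$, so restriction of scalars alone does not finish the argument; it only shows $H\underline{M}\wedge S^{\gamma_i} \simeq H\underline{M}\wedge S^{\phi_j^{*}\gamma}$ after also twisting $\underline{M}$, which is circular. Instead I would argue directly at the level of $H\underline{A}$-modules. The key point is that $H\underline{M}\wedge S^{\gamma_i}$ is determined, as an $H\underline{A}$-module spectrum, by the chain complex of Mackey functors $\underline{C}_*(S^{\gamma_i})$ together with its $H\underline{A}$-module (equivalently, its $\underline{A}$-linear) structure — this is the standard equivalence between $H\underline{A}$-modules and the derived category of Mackey-functor chain complexes, with $H\underline{M}\wedge S^{V}$ corresponding to $\underline{M}\otimes_{\underline{A}} \underline{C}_*(S^{V})$. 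So it suffices to produce an isomorphism of chain complexes of $\underline{A}$-modules (i.e. of Green-functor modules, chain-homotopy-equivalent at worst) $\underline{C}_*(S^{\gamma_i}) \cong \underline{C}_*(S^{\gamma})$. I would build this from the observation that for $\gamma$ itself the complex $\underline{C}_*(S(\gamma))$, written out from Lemma \ref{lbred} with $m=1$, has underlying free $\Z[D_{2p}]$-module on the single generator $c_1$ at the free-orbit level (plus $a_{1,0}, b_{1,0}$ at the $\Z/2$-level), and the only data distinguishing different $i$ is the group element $\zeta_i^{(p+1)/2}$ appearing in $dc_1 = \zeta_i^{(p+1)/2} b_{1,0} - a_{1,0}$. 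An explicit change of basis on the free orbit generated by $c_1$ — replacing $c_1$ by $\zeta^{t}c_1$ for a suitable $t$ depending on $i,j$ — conjugates the $i$-differential to the $\gamma$-differential, and this change of basis is manifestly $\underline{A}$-linear because it is induced by left multiplication by a group element, which commutes with the Mackey-functor (induction/restriction/conjugation) structure maps.

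I would then assemble these pieces: first the explicit $\underline{A}$-module chain isomorphism $\underline{C}_*(S^{\gamma_i})\cong\underline{C}_*(S^{\gamma})$ just described; then invoke the equivalence of homotopy categories $\mathrm{Ho}(H\underline{A}\text{-mod}) \simeq D(\underline{A}\text{-mod})$ (or, more elementarily, note that a map of free Mackey-functor chain complexes inducing an isomorphism realizes to a weak equivalence of the associated $H\underline{A}$-modules $H\underline{A}\wedge S^{\gamma_i} \to H\underline{A}\wedge S^{\gamma}$); and finally tensor up from $\underline{A}$ to $\underline{M}$, i.e. apply $H\underline{M}\wedge_{H\underline{A}}(-)$, to conclude $H\underline{M}\wedge S^{\gamma_i}\simeq H\underline{M}\wedge S^{\gamma}$ as $H\underline{A}$-modules. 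The main obstacle I anticipate is the bookkeeping in the second step — checking that the change of basis is genuinely a chain \emph{isomorphism} (not merely a chain homotopy equivalence) compatible with all the restriction, transfer and Weyl-group conjugation maps of the Mackey functor, and that the element $t$ can be chosen uniformly; once that linear-algebra-over-the-Burnside-ring verification is done, the passage to spectra is formal.
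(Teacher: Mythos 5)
Your overall strategy coincides with the paper's: reduce the statement to a comparison of the Mackey-functor-valued cellular chain complexes of the $S^{\gamma_i}$, and then realize the chain-level comparison spectrally. Your realization step (the equivalence between $H\underline{A}$-modules and the derived category of Mackey functors, plus $H\underline{M}\wedge S^{V}\leftrightarrow \underline{M}\otimes_{\underline{A}}\underline{C}_*(S^{V})$) is a legitimate, more ``black-box'' substitute for the paper's hands-on construction of a functor $\mathscr{H}:Ch_{\geq 0}(Mack)\to D\mathscr{S}p_G$ via Dold--Puppe, the Eilenberg--MacLane functor and geometric realization; and your observation that restriction along an automorphism of $D_{2p}$ is circular for general $\underline{M}$ is correct and worth stating.

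The gap is in your chain-level step, which is also the step the paper itself treats most briefly. The change of basis $c_1\mapsto \zeta^t c_1$ does not conjugate the $\gamma_i$-differential into the $\gamma$-differential: $d(\zeta^t c_1)=\zeta^{t}\zeta_i^{(p+1)/2}b_{1,0}-\zeta^t a_{1,0}$, and since the stabilizer of $a_{1,0}$ is $\langle\tau\rangle$ and $\zeta^t\in\langle\tau\rangle$ only for $t\equiv 0\pmod p$, matching the $a_{1,0}$-term forces $t\equiv 0$, after which the $b_{1,0}$-term forces $i\equiv\pm 1$. Nor can you compensate on the $0$-cells: a map sending $b_{1,0}\mapsto \zeta^s b_{1,0}$ with $s\not\equiv 0$ is not a morphism of Mackey functors $\underline{A}_{D_{2p}/\langle\tau\rangle}\to\underline{A}_{D_{2p}/\langle\tau\rangle}$, because $\zeta^s b_{1,0}$ is not $\tau$-fixed; the Weyl group of $\langle\tau\rangle$ in $D_{2p}$ is trivial, so there are no nontrivial ``left multiplication by a group element'' automorphisms of this orbit. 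This is exactly where ``manifestly $\underline{A}$-linear because it is induced by left multiplication by a group element'' breaks down. What your realization step actually needs, and what can be produced, is a \emph{quasi-isomorphism} rather than an isomorphism of complexes: a chain map inducing isomorphisms on all homology Mackey functors, built from non-unit, transfer-type components. For example, for $p=5$ and $i$ with inverse $j=3$, writing $\omega=\zeta^{(p+1)/2}$ so that the two differentials are $dc_1=\omega^{j} b_{1,0}-a_{1,0}$ and $dc_1=\omega b_{1,0}-a_{1,0}$, the assignment $f_2(c_1)=c_1$, $f_1(a_{1,0})=a_{1,0}+b_{1,0}$, $f_1(b_{1,0})=(\zeta+\zeta^{-1})b_{1,0}$ is a chain map between them (the coefficient $\zeta+\zeta^{-1}$ is $\tau$-fixed, hence legitimate, but it is not invertible). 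Exhibiting such a map in general and verifying that it is an isomorphism on every homology Mackey functor is the substantive missing content; once that is done, the passage to spectra is, as you say, formal.
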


The proof of this result will occupy
the remainder of this section. Let $\underline{M}$ be a Mackey functor. Generally, if $X$ is a finite $G$-CW complex, write $$X^n/X^{n-1}=X_{n+} \wedge S^n$$ where $X^n$ is the $n$th skeleton and $X_n$ is a discrete $G$-set. We have a chain complex of Mackey functors $\underline{C}_*(X;M)$ given by $$\underline{C}_n(X;\underline{M})=\underline{\pi}_0(H\underline{M}\wedge  X_{n+}).$$ 

It is also true that for any finite $G$-set $S$, $$\underline{C}_n(X;\underline{M})(S)=\underline{M}(S \times X_n),$$ which is the associated Mackey functor, also denoted by $\underline{M}_{X_n}$, to a finite $G$-set $X_n$.

\vspace{3mm}

To compute the $D_{2p}$-Mackey functor-valued chain complex 
$\underline{C}_*(S^{\gamma_i};\underline{M})$ for constant coefficient $\underline{\Z}$ and Burnside coefficient $\underline{A}$, we start with describing some $D_{2p}$-Mackey functors. Despite the
fact that the group $D_{2p}$ is non-abelian, its conjugacy relations among subgroups are simple and we can depict a $D_{2p}$-Mackey functor $\underline{M}$ by a diagram of the following form:

$$
\xymatrix@C=1pt@R=25pt{
 & \underline{M}(D_{2p}/e) \ar@/^/[dr]\ar@/^/[dl] & \\
\underline{M}(D_{2p}/\langle\tau\rangle) \ar@/^/[ur]\ar@/^/[dr] &  & \underline{M}(D_{2p}/\langle\zeta\rangle) \ar@/^/[ul]\ar@/^/[dl]\\
 & \underline{M}(D_{2p}/D_{2p})\ar@/^/[ul]\ar@/^/[ur] &
}
$$

\begin{example}Constant Mackey functor $\underline{\mathbb{Z}}$.
$$
\xymatrix@C=15pt@R=20pt{
 & \mathbb{Z} \ar@/^/[dr]^p\ar@/^/[dl]^2 & \\
\mathbb{Z} \ar@/^/[ur]^1\ar@/^/[dr]^p &  & \mathbb{Z} \ar@/^/[ul]^1\ar@/^/[dl]^2\\
 & \mathbb{Z}\ar@/^/[ul]^1\ar@/^/[ur]^1 &
}
$$
\end{example}
\begin{example}
Given a $\Z[G]$-module $M$, we have fixed-point Mackey functor $\underline{M}$ defined by $\underline{M}(G/H)=M^H$, restriction given by inclusion, and transfer given by summing over cosets. For example the fixed point Mackey functor $\underline{\mathbb{Z}[D_{2p}/\langle \tau\rangle]}$ is given by 
$$
\xymatrix@!C@C=10pt@R=20pt{
 & \mathbb{Z}[D_{2p}/\langle\tau\rangle] \ar@/^/[dr]^{(1,1,...,1)}\ar@/^/[dl]^B & \\
\frac{p-1}{2}\mathbb{Z}\oplus\mathbb{Z} \ar@/^/[ur]^A\ar@/^/[dr]^{(2,...,2,1)} &  & \mathbb{Z} \ar@/^/[ul]^{[1,1,...,1]}\ar@/^/[dl]^2\\
 & \mathbb{Z}\ar@/^/[ul]^{[1,1,...,1]}\ar@/^/[ur]^1 &
}
$$ Here round brackets stand for row vectors while square brackets stand for column vectors, and $$A=\left[
 \begin{matrix}
   0 & I_1\\
   I_{\frac{p-1}{2}} & 0\\
   J_{\frac{p-1}{2}} & 0
  \end{matrix}
  \right] ,B=\left[
 \begin{matrix}
   0 & I_{\frac{p-1}{2}} & J_{\frac{p-1}{2}}\\
   2 & 0 & 0
  \end{matrix}
  \right].$$
where $I_n$ is the $n\times n$ identity matrix and $J_n$ is the $n\times n$ minor diagonal identity matrix.

Similarly, the fixed point Mackey functor $\underline{\mathbb{Z}[D_{2p}/e]}$ is given by the following diagram.
$$
\xymatrix@!C@C=10pt@R=20pt{
 & \mathbb{Z}[D_{2p}/e] \ar@/^/[dr]^D\ar@/^/[dl]^{(I_p,I_p)} & \\
p\mathbb{Z} \ar@/^/[ur]^{[I_p,I_p]}\ar@/^/[dr]^{(1,1,...,1)} &  & \mathbb{Z}[D_{2p}/\langle\zeta\rangle] \ar@/^/[ul]^C\ar@/^/[dl]^{(1,1)}\\
 & \mathbb{Z}\ar@/^/[ul]^{[1,1,...,1]}\ar@/^/[ur]^{[1,1]} &
}
$$
where the matrices are represented by 
$$C=([1,...,1,0,...,0],[0,...,0,1,...,1]),$$ $$D=[(1,...,1,0,...,0),(0,...,0,1,...,1)].$$
\end{example}

The matrices above are derived by arranging the order of cells carefully: The basis of $\mathbb{Z}[D_{2p}/\langle\tau\rangle]$ can be identified with cells generated by $a_{1,0}$. Recalling 
that $\zeta_i$ acts by $2\pi/p$-rotation, we put a geometric counterclockwise order on
the cells 
$$a_{1,0},\zeta_i a_{1,0},...,\zeta_i^{p-1}a_{1,0}.$$ We also put an order on the
generators of $\mathbb{Z}[D_{2p}/\langle\tau\rangle]^{\langle\tau\rangle}$ by 
$$\zeta_i a_{1,0}+\zeta_i^{p-1}a_{1,0},...,\zeta_i^{\frac{p-1}{2}}a_{1,0}+\zeta_i^{\frac{p+1}{2}}a_{1,0},a_{1,0},$$ 
and this is why the upper left pair of arrows in the diagram for $\underline{\mathbb{Z}[D_{2p}/\langle \tau\rangle]}$ has the given matrix representation. 

The basis of $\mathbb{Z}[D_{2p}/e]$ can be identified with cells generated by $c_1$.  
We arrange them in the following order: $$c_1,\zeta_ic_1,...,\zeta_i^{p-1}c_1,\tau c_1,\tau\zeta_i c_1,...,\tau\zeta_i^{p-1}c_1.$$ The fixed point submodules are endowed with the induced order of basis.

\vspace{3mm}

Now fix $\underline{M}=\underline{\Z}$. In this case, by 
the double coset formula, the associated chain complex of Mackey functors can be calculated as fixed point Mackey functos. Hence using the examples above, the Mackey functor-valued $D_{2p}$-equivariant chain complexes for $S^{\gamma_i}$ is the following: $$\underline{\mathbb{Z}}\longleftarrow \underline{\mathbb{Z}[D_{2p}/\langle \tau\rangle]} \oplus \underline{\mathbb{Z}[D_{2p}/\langle \tau\rangle]} \longleftarrow \underline{\mathbb{Z}[D_{2p}/e]}.$$ The differentials are derived from Lemma ~\ref{lbred}. Since the differentials are $D_{2p}$-equivariant, we immediately see that all chain complexes for the different $S^{\gamma_i}$'s are isomorphic.

\vspace{3mm}

However, the isomorphism is not induced by any $D_{2p}$-equivariant map between the representation spheres. To prove Proposition \ref{prop22} we instead want to construct a functor $\mathscr{H}: Ch_{\geq 0}(Mack) \rightarrow D{\mathscr{S}p_G}$ such that 

(1). $\mathscr{H}\underline{M}=H\underline{M}$.

(2). $\mathscr{H}\underline{C}_*(X;\underline{M}) \simeq X \wedge H\underline{M}$.

\vspace{3mm}

\leftline{\textbf{Construction:}} Let $\mathscr{H}$ be the composition of the following functors $$Ch_{\geq 0}(Mack) \xrightarrow{K} sMack \xrightarrow{H} sD\mathscr{S}p_G \xrightarrow{|\cdot|} D\mathscr{S}p_G$$ where $K$ is the functor in Dold-Puppe correspondence which is an equivalence of first two categories; $H$ is the Eilenbeg-Maclane functor and $|\cdot|$ is geometric realization functor. The Eilenberg-Maclane construction is functorial; a recent account of this is in \cite{bohoso}. 

As an example we compute the case when $X=G/H_+$. Then $\underline{C}_*(X;\underline{M})$ is concentrated on degree 0. All the functors are computable, and we have $$\mathscr{H}\underline{C}_*(X;\underline{M}) = H\underline{M}_{G/H} \simeq H\underline{M} \wedge G/H_+.$$ The last equivalence can be verified by computing the homotopy groups of $H\underline{M} \wedge G/H_+$, and using the uniqueness of Eilenberg-MacLane spectra. 

In fact, one could make it into an natural isomorphism. By the projection formula $$G \ltimes_H H\underline{M} \cong G/H_+ \wedge H\underline{M}$$ and adjunction, it arises from the natural map of $H$-spectrum $H\underline{M} \rightarrow H\underline{M}_{G/H}$ induced by inclusion at the coset $eH$: $$\underline{M}\hookrightarrow \underline{M}_{G/H}.$$

\vspace{3mm}

For any finite $G$-CW complex $X$, we realize it as a simplicial $G$-set and the functor $\mathscr{H}$ is constructed as above. Then Proposition \ref{prop22} follows directly. 

\vspace{3mm}
\section{Proof of Theorem \ref{t1}}\label{sp2}

In this section we still focus on $\underline{\Z}$ Mackey functor coefficient and will present 
a proof of Theorem~\ref{t1}. To do this, first we calculate the $D_{2p}$-equivariant homology and cohomology of $S(m\gamma)$ as Bredon (co)homology. Recall that there is a cellular filtration on $S(m\gamma)$ by the $\Z/p$-equivariant cells generated by \rref{ez3c1}, \rref{ez3c2} of dimension $\leq s$. For $k\geq 1$, the filtration degree $2k-1$ part is generated by cells $b_{k,\ell}, c_k$ and the degree $2k-2$ part is generated by cells $a_{k,\ell}$. By using the differentials computed above, the corresponding homological spectral sequence has the following $E^1$-term: $$E_{2k-1,*}^1=B_{k-1}[k-1], \, \mathrm{for} \, 1\leq k \leq m$$ $$E_{2k,*}^1=B_k[k-1], \, \mathrm{for} \, 1\leq k \leq m.$$ The nontrivial differential $d^1$ is also determined by Lemma \ref{lbred}, which is an isomorphism except for $E_{4j,0}^1\rightarrow E_{4j-1,0}^1: \mathbb{Z}\xrightarrow{p} \mathbb{Z}$. On the two vertical edges $s=0,2m$, the terms also survive and the spectral sequence collapses to the $E^2$ page. In the case of cohomology, one just needs to turn subscripts into superscipts, mirror the computations by reversing arrows and use restriction maps of Mackey functors. Thus, we have proved the following

\begin{proposition}\label{prop31} For $m>0$, we have $$H^{D_{2p}}_*(S(m\gamma),\underline{\mathbb{Z}})=\mathbb{Z}\oplus {}_0{A}_m \oplus B_m[m-1],$$ $$H_{D_{2p}}^*(S(m\gamma),\underline{\mathbb{Z}})=\mathbb{Z}\oplus {}^0{A}^m \oplus B^m[m-1].$$
\end{proposition}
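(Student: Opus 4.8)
The plan is to compute the two hyperhomology spectral sequences coming from the $\Z/p$-equivariant cellular filtration of $S(m\gamma)$ described just before the statement, and then read off the answer. First I would record the $E^1$-page: the filtration quotient in degrees $2k-1$ and $2k-2$ are the two cells $\{b_{k,\ell},c_k\}$ and $\{a_{k,\ell}\}$ coming from subdividing the standard $\Z/p$-cells \rref{ez3c1}, \rref{ez3c2}, whose underlying disks are unit disks of the representations \rref{ez3c3}; hence by Proposition~\ref{plb} (Stong's computation) one gets $E^1_{2k-1,*}=B_{k-1}[k-1]$ and $E^1_{2k,*}=B_k[k-1]$ for $1\le k\le m$, together with the two extreme columns $s=0$ (the basepoint $a_{1,0}$-type cell, contributing $\Z$) and $s=2m$ (top cell, contributing $B_m[m-1]$ shifted appropriately). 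Here I am using Proposition~\ref{prop22} to work with $\gamma=\gamma_1$ without loss of generality, and the double coset formula / fixed-point description of the Mackey-functor chain complex for $\underline{\Z}$ established in the previous section.

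Next I would analyze the $d^1$-differential, which is entirely dictated by Lemma~\ref{lbred}. The key mechanism is the block of relations $da_{k,0}=a_{k-1,0}-b_{k-1,0}$, $db_{k,0}=a_{k-1,0}-b_{k-1,0}$, together with $dc_1=\zeta_i^{(p+1)/2}b_{1,0}-a_{1,0}$ and the relation $da_{2,1}=-a_{2,0}-(1+\sigma)c_1+\sigma\tau c_1$: upon passing to fixed points / the Mackey-functor level these become, away from filtration degree $0$, isomorphisms between the $B_{k-1}[k-1]$ column and a shift of the $B_k[k-1]$ column, so almost everything cancels. The one place where cancellation fails is the arrow $E^1_{4j,0}\to E^1_{4j-1,0}$, where the relevant entry of the differential is multiplication by $p$ (this is exactly where the index-$p$ subgroup $\Z/p$ enters, via the transfer summing over the $p$ rotated copies of $c_{2j-1}$), producing the $\Z/p$'s. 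Tracking the internal grading (the $[k-1]$ shifts) through these isomorphisms and through the surviving $p$-torsion is what yields precisely the pattern $2s<n<2t-1$, $n\equiv 3\ (4)$ in the definition \rref{eal} of ${}_0A_m$, and $n\equiv 0\ (4)$ in \rref{eal2} for ${}^0A^m$; the dual (cohomology) statement comes from reversing all arrows, replacing transfers by restrictions, and re-indexing $B_\ell\rightsquigarrow B^\ell$ via the Spanier--Whitehead/Proposition~\ref{plb} dictionary. After checking the $E^2$-page has no room for higher differentials (consecutive surviving groups lie in non-adjacent filtrations, so the sequence collapses) and that there are no nontrivial extensions (the surviving $\Z$ sits in filtration $0$ and the $B_m[m-1]$ in filtration $2m$, with the $\Z/p$'s isolated in between), the stated direct-sum decomposition follows.

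The main obstacle I expect is the bookkeeping in the $d^1$ analysis: Lemma~\ref{lbred} is stated at the level of the free $\Z[D_{2p}]$-cellular chain complex, so I must carefully pass to the Mackey-functor-valued complex for $\underline{\Z}$ (equivalently, take the four fixed-point complexes and keep track of restrictions and transfers), identify which of the many terms in each differential survive after this reduction, and pin down the exact homological degree and the exact congruence class mod $4$ of each surviving $\Z/p$. In particular the $\ell>0$ cells $a_{k,\ell},b_{k,\ell}$ (which are free, isotropy $\{e\}$) must be shown to contribute nothing new to $H^{D_{2p}}_*$ beyond what the $\ell=0$ and $c_k$ cells already give — this is the content of the $d^1$ being an isomorphism on those summands — and verifying this cleanly for all $1<\ell<k-1$ using the generic formula $da_{k,\ell}=a_{k-1,\ell}-b_{k-1,\ell}+(-1)^{k-\ell}a_{k,\ell-1}+(-1)^{k-1}\tau a_{k,\ell-1}$ is the one genuinely fiddly step. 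Everything else (collapse, extension-freeness, passage between homology and cohomology) is formal once the $E^2$-page is in hand.
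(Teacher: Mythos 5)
Your proposal follows essentially the same route as the paper's own proof: the same $\Z/p$-cellular filtration of $S(m\gamma)$, the same identification of the $E^1$-page with shifted copies of Stong's $B_k$ coming from the induced filtration quotients, the same observation that Lemma~\ref{lbred} makes $d^1$ an isomorphism except for the maps $E^1_{4j,0}\to E^1_{4j-1,0}$ given by multiplication by $p$, collapse at $E^2$ with the surviving edge terms $\Z$ and $B_m[m-1]$, and dualization (reversing arrows, restrictions in place of transfers) for the cohomological statement. The approach and all key steps match; no gap to report.
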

\qed

It may be tempting to try to use the same method for calculating the reduced $D_{2p}$-equivariant
(co)homology of $\Sigma^{\ell\alpha}\wedge S(m\gamma)_+$ for $\ell\in\Z$, but there are two difficulties. First, for $\ell>0$, the chain complex we obtain by smashing the CW-complexes cell-wise grows with $\ell$. More importantly, for $\ell<0$, the method actually fails: the Bredon chain complex is not an equivariantly stable object, and actually does not exist for spectra obtained by desuspending by non-trivial representations. There is, of course, a concept of an equivariant CW-spectrum \cite{lms}, but any chain complex in this stable context has to be built  directly on the Mackey functor level.

\vspace{3mm}

We proceed as follows: Suspend the filtration above by $S^{\ell\alpha}$. The corresponding spectral sequence's $d^1$ is determined by $S^{\ell\alpha}$-suspension of the connecting map $F_{2k+2}/F_{2k+1}\rightarrow \Sigma F_{2k+1}/F_{2k}$ of the triad 
$$(F_{2k+2},F_{2k+1},F_{2k}).$$ 
(Note: those are ``odd-to-even" connecting maps; by Lemma \ref{lbred}, the ``even-to-odd" connecting maps are $0$.)
Stably it does not depend on $\ell$. Note that the filtration quotients have the following form: $$F_{2k+1}/F_{2k}\cong D_{2p} \ltimes_{\mathbb{Z}/2} S^{k+(k+1)\alpha}, \quad F_{2k+2}/F_{2k+1}\cong D_{2p} \ltimes_{\mathbb{Z}/2} S^{(k+1)+(k+1)\alpha}.$$ Hence the connecting map is a stable 
$D_{2p}$-equivariant map $$D_{2p}/(\mathbb{Z}/2)_+ \rightarrow D_{2p}/(\mathbb{Z}/2)_+$$ 
By adjunction, it is equivalent to a 
$\mathbb{Z}/2$-equivariant stable map $$S^0 \rightarrow D_{2p}/(\mathbb{Z}/2)_+$$ which is classified by an element in 
\beg{eabbbb}{A(\mathbb{Z}/2) \oplus \mathbb{Z}^{\oplus (p-1)/2}}
by the Wirthm\"{u}ller isomorphism (which allows us to switch the source and target) and the fact that $\Z/2$-equivariantly, the orbit $D_{2p}/(\Z/2)$
is a disjoint union of one fixed point and $(p-1)/2$ free orbits.
The connecting map is read off from the attaching maps from $a_{k+1,k}$ to $c_k$, namely from
$$da_{k,k-1}=-a_{k,k-2}+(-1)^{k-1}\tau a_{k,k-2}-(1+\sigma)c_{k-1}+(-1)^{k-2}\sigma\tau c_{k-1}.$$
This shows that the connecting map does not depend on $k$, and is in \rref{eabbbb} represented by
the element
$$(1,1,\dots,1).$$
This corresponds to multiplication by $p$ on the constant Mackey functor $\underline{\Z}$.
It is convenient then to look at the spectral sequence of $\Sigma^{\ell+\ell\alpha}S(m\gamma)_+$, whose $E^1$ page is a shift of the conjunction of both cohomology and homology $E^1$ page for $S(m\gamma)$, and it also collapses to the $E^2$-page. Thus, we obtain 

\vspace{3mm}
\begin{proposition}\label{pmgl}
For $m>0$, we have
$$H_*^{D_{2p}}(\Sigma^{\ell\alpha}S(m\gamma)_+,\underline{\Z})=
B_\ell\oplus B_{\ell+m}[m-1] \oplus {}_\ell A_{\ell+m}[-\ell],$$
$$H^*_{D_{2p}}(\Sigma^{\ell\alpha}S(m\gamma)_+,\underline{\Z})=
B^\ell\oplus B^{\ell+m}[m-1] \oplus {}^\ell A^{\ell+m}[-\ell].$$
\end{proposition}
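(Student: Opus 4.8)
The plan is to make rigorous the spectral-sequence argument outlined in the paragraph preceding the statement, and then to check that the sequence collapses with no extension problems.

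First I would fix, for every $\ell\in\Z$, the filtration of the $D_{2p}$-spectrum $\Sigma^{\ell\alpha}S(m\gamma)_+$ obtained by smashing $S^{\ell\alpha}$ with the $\Z/p$-skeletal filtration $F_0\subset\cdots\subset F_{2m-1}=S(m\gamma)$ of Section~\ref{sp1}, after adjoining a disjoint basepoint. Passing to spectra is exactly what legitimises the case $\ell<0$: this is a filtration by sub\emph{spectra}, hence yields a strongly convergent (the filtration being finite) spectral sequence even when no desuspended equivariant CW-structure exists. Using the identification \rref{ez3c3} of the cells \rref{ez3c1}, \rref{ez3c2} with disks of the representations $(k-1)\alpha+(k-1)\epsilon$ and $k\alpha+(k-1)\epsilon$, the subquotients become $\Sigma^{\ell\alpha}$ of spectra of the form $D_{2p}\ltimes_{\Z/2}S^{a+b\alpha}$; so by change of groups --- the restriction of $\underline\Z$ along $\Z/2\hookrightarrow D_{2p}$ is again the constant Mackey functor $\underline\Z$ --- their reduced Bredon homology is $\widetilde H^{\Z/2}_*(S^{a+(b+\ell)\alpha};\underline\Z)=B_{b+\ell}[a]$, a shift of the Stong groups of Proposition~\ref{plb}, and dually the cohomology is a shift of $B^{b+\ell}$. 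This pins down the $E^1$-page.

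Next I would identify $d^1$. By Lemma~\ref{lbred} the ``even-to-odd'' connecting maps $F_{2k+1}/F_{2k}\to\Sigma F_{2k}/F_{2k-1}$ vanish; in particular the bottom quotient $F_0=\Sigma^{\ell\alpha}(D_{2p}/\langle\tau\rangle)_+$ and the top quotient $F_{2m-1}/F_{2m-2}\simeq D_{2p}\ltimes_{\Z/2}S^{(m-1)+(m+\ell)\alpha}$ neither support nor receive a $d^1$, hence survive untouched and contribute $B_\ell$ and $B_{\ell+m}[m-1]$ (resp.\ $B^\ell$ and $B^{\ell+m}[m-1]$). The ``odd-to-even'' connecting maps $F_{2k+2}/F_{2k+1}\to\Sigma F_{2k+1}/F_{2k}$ are, already at the spectrum level, self-maps of $\Sigma^\infty_+(D_{2p}/\langle\tau\rangle)$, hence independent of $k$ and --- after smashing with $S^{\ell\alpha}$ --- of $\ell$; reading off the coefficient of $c_k$ in $da_{k+1,k}$ and invoking the Wirthm\"uller isomorphism identifies this self-map with $(1,\dots,1)\in A(\Z/2)\oplus\Z^{\oplus(p-1)/2}$, i.e.\ with multiplication by $p$ on $\underline\Z$. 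Since $p$ is odd, this map is an isomorphism on every $\Z/2$-summand of $B_{b+\ell}$ (resp.\ $B^{b+\ell}$) and is $\Z\xrightarrow{\,p\,}\Z$ on the unique free summand. Hence $d^1$ cancels all $2$-torsion in the interior of the filtration in matched pairs, and $E^2$ consists of the two edge terms above together with the cokernels $\Z/p$ of these maps $\Z\xrightarrow{\,p\,}\Z$.

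Then I would show $E^2=E^\infty$ and assemble the answer. Collapse holds because the $\Z/p$-summands are detected by restriction to $\langle\zeta\rangle\cong\Z/p$ --- where $\alpha$ restricts trivially and $\gamma$ to a faithful $2$-dimensional $\Z/p$-representation, so the sequence becomes Stong's $\Z/p$-computation and degenerates --- while the $\Z$- and $\Z/2$-summands lie in the two edge filtrations, which support no differentials. To match the surviving cokernels, with their filtration-degree shifts, against the intervals $2\ell<n<2(\ell+m)-1$ and the residues mod~$4$ defining ${}_\ell A_{\ell+m}$ and ${}^\ell A^{\ell+m}$, it is cleanest --- as the paper's preceding paragraph indicates --- to run instead the spectral sequence of $\Sigma^{\ell+\ell\alpha}S(m\gamma)_+$ and then desuspend by the trivial representation $\ell\epsilon$: its $E^1$-page is, up to a single overall shift, independent of $\ell$ and visibly built from the data of Proposition~\ref{prop31} (for $\ell<0$ it combines the homology and cohomology $E^1$-pages of $S(m\gamma)$, since the relevant $B$-indices may be negative), so the whole identification reduces to transporting that proposition along a shift. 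Finally there are no extension problems: the $\Z/p$'s occur only in degrees $\equiv 3\bmod 4$ (resp.\ $\equiv 0\bmod 4$), the $B$-summands in even degrees with their unique free classes sitting in degrees $\ell$ and $\ell+2m-1$ (never inside the relevant interval with residue $3$), consecutive $\Z/p$'s are $4$ apart, and any coincidence of degree is with a $\Z/2$, where $\Z/p\oplus\Z/2\cong\Z/2p$ makes the extension immaterial. This yields the stated decompositions. The step I expect to be the genuine obstacle is exactly this last-but-one bookkeeping --- fitting the cokernels, after all shifts, precisely into the groups ${}_\ell A_{\ell+m}$, ${}^\ell A^{\ell+m}$ and cleanly separating the two edge blocks from the middle --- which is why the reduction to the $\ell=0$ data of Proposition~\ref{prop31} via the $\Sigma^{\ell+\ell\alpha}$-reformulation is the crux.
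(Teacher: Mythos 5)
Your argument is correct and follows the paper's own proof essentially step for step: the same $\ell\alpha$-suspended skeletal filtration, the same identification of the odd-to-even connecting map via adjunction and the Wirthm\"uller isomorphism with the element $(1,\dots,1)$ (i.e.\ multiplication by $p$), and the same reduction via $\Sigma^{\ell+\ell\alpha}S(m\gamma)_+$ to the data of Proposition~\ref{prop31}. The only difference is that you spell out the collapse at $E^2$ and the absence of extension problems, which the paper leaves implicit.
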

\qed

\vspace{3mm}

\noindent
{\em Proof of Theorem \ref{t1}:} We use the cofiber sequence $$\Sigma^{\ell\alpha}S(m\gamma)_+\rightarrow S^{\ell\alpha} \rightarrow S^{\ell\alpha+m\gamma}$$ to finish our computation. 
Looking at the long exact sequence in homology, the morphism $B_\ell \rightarrow B_\ell$ is the transfer map $p$, which is an isomorphism except in the top dimension when $\ell$ is even, and this gives an extra $\mathbb{Z}/p$. Besides, all the other components are shifted up by 1. Hence we have proved that $$H^{D_{2p}}_*(S^{\ell\alpha+m\gamma},\underline{\mathbb{Z}})=B_{\ell+m}[m] \oplus {}_{\ell-1}{A}_{\ell+m}[-\ell+1].$$ In cohomology the restriction maps always give isomorphisms, hence $$H_{D_{2p}}^*(S^{\ell\alpha+m\gamma},\underline{\mathbb{Z}})=B^{\ell+m}[m]\oplus {}^{\ell}{A}^{\ell+m}[-\ell+1].$$ 
\qed

\vspace{3mm}

\begin{example}

{\em As an example, we illustrate how to compute 
$$H^{D_{2p}}_*(\Sigma^{-4\alpha}S(5\gamma)_+,\underline{\Z}).$$ 
First we compute the $D_{2p}$-equivariant homology and cohomology of 
$S(5\gamma)$. The following is the $E^1$ page of the homological spectral sequence for $H^{D_{2p}}_*(S(5\gamma),\underline{\Z}).$

\vspace{10mm}

\begin{sseqdata}[ name = ss1, xscale = 1.1,
homological Serre grading, x label = {$E^1$ page for $H^{D_{2p}}_*(S(5\gamma),\underline{\Z})$ }, classes = {draw = none} ]

\class["\mathbb{Z}"](0,0)

\class["0"](1,0)

\class["0"](2,0)

\class["\Z"](3,0)

\class["\Z"](4,0)

\class["0"](5,0)

\class["0"](6,0)

\class["\Z"](7,0)

\class["\Z"](8,0)

\class["0"](9,0)

\class["\Z/2"](1,-1)

\class["\Z/2"](2,-1)

\class["\Z/2"](5,-1)

\class["\Z/2"](6,-1)

\class["\Z/2"](9,-1)

\class["\Z/2"](3,-2)

\class["\Z/2"](4,-2)

\class["\Z/2"](7,-2)

\class["\Z/2"](8,-2)

\class["\Z/2"](5,-3)

\class["\Z/2"](6,-3)

\class["\Z/2"](9,-3)

\class["\Z/2"](7,-4)

\class["\Z/2"](8,-4)

\class["\Z/2"](9,-5)

\d["p"]1(4,0)

\d["p"]1(8,0)

\d1(2,-1)

\d1(6,-1)

\d1(4,-2)

\d1(8,-2)

\d1(6,-3)

\d1(8,-4)

\end{sseqdata}

\printpage[ name = ss1, page = 1 ]

\vspace{10mm}

The differential $d^1$ is a multiplication by $p$ when there is a $\Z$ in the target (which is supported by $c_k$, $k$ even). The exception is filtration degree $2m-1=9$, where there is no
differential with that target, and filtration degree $0$, where there is no
differential with that source. There is no room for higher differentials
for dimensional reasons. Hence the spectral sequence collapses to the $E^2$ page. The two vertical edges and the $t=0$ line give the three summands in Proposition \ref{prop31}.

\vspace{5mm}

The following is the $E_1$ page of the cohomological spectral sequence. 

\vspace{10mm}

\begin{sseqdata}[ name = ss2, xscale = 1.1,
cohomological Serre grading, x label = {$E_1$ page for $H_{D_{2p}}^*(S(5\gamma),\underline{\Z})$}, classes = {draw = none} ]

\class["\mathbb{Z}"](0,0)

\class["0"](1,0)

\class["0"](2,0)

\class["\Z"](3,0)

\class["\Z"](4,0)

\class["\Z/2"](5,0)

\class["\Z/2"](6,0)

\class["\Z"](7,0)

\class["\Z"](8,0)

\class["\Z/2"](9,0)

\class["\Z/2"](7,-1)

\class["\Z/2"](8,-1)

\class["\Z/2"](9,-2)

\d["p"]1(3,0)

\d1(5,0)

\d["p"]1(7,0)

\d1(7,-1)

\end{sseqdata}

\printpage[ name = ss2, page = 1 ]

\vspace{10mm}

Now let us suspend by ${-4-4\alpha}$. Since the filtration on $S(5\gamma)$ is given by 
$$S^0,S^\alpha,S^{1+\alpha},...,S^{4+4\alpha},S^{4+5\alpha},$$
the filtered quotients are given by 
$$S^{-4-4\alpha},S^{-4-3\alpha},S^{-3-3\alpha},...,S^{-1},S^0,S^\alpha.$$
The following is the $E_1$ page, which is a shift of a juxtaposition of the dual of a truncation (at filtration degree 7) of the cohomological $E_1$ page and a truncation (at filtration degree 1) of the homological $E^1$ page.

\vspace{10mm}

\begin{sseqdata}[ name = ss3, xscale = 1.1,
homological Serre grading, x label = {$E^1$ page for $\tilde{H}^{D_{2p}}_*(\Sigma^{-4-4\alpha}S(5\gamma)_+,\underline{\Z})$ }, classes = {draw = none} ]

\class["0"](5,0)

\class["\Z"](4,0)

\class["\Z"](3,0)

\class["\Z/2"](2,0)

\class["0"](6,0)

\class["\mathbb{Z}"](8,0)

\class["\mathbb{Z}"](7,0)

\class["0"](9,0)

\class["\Z/2"](9,-1)

\class["\Z/2"](1,0)

\class["\Z"](0,0)

\class["\Z/2"](0,1)

\d["p"]1(8,0)

\d["p"]1(4,0)

\d1(2,0)

\end{sseqdata}

\printpage[ name = ss3, page = 1 ]

\begin{sseqdata}[ name = ss4, xscale = 1.1,
homological Serre grading, x label = {$E^2$ page for $\tilde{H}^{D_{2p}}_*(\Sigma^{-4-4\alpha}S(5\gamma)_+,\underline{\Z})$ }, classes = {draw = none} ]

\class["0"](5,0)

\class["0"](4,0)

\class["\Z/p"](3,0)

\class["0"](2,0)

\class["0"](6,0)

\class["0"](8,0)

\class["\mathbb{Z}/p"](7,0)

\class["0"](9,0)

\class["\Z/2"](9,-1)

\class["0"](1,0)

\class["\Z"](0,0)

\class["\Z/2"](0,1)

\end{sseqdata}

\printpage[ name = ss4, page = 1 ]

}
\end{example}

\vspace{3mm}

\vspace{5mm}

\section{Burnside ring coefficients}\label{sburn}

\vspace{3mm}

In this section we will apply the procedure above to deal with the Burnside ring coefficient $\underline{A}$. The difference is somewhat minor here due to the fact that most of the cells are free
and therefore the computation differs only
in a few dimensional degrees. 
We denote the Burnside rings additively by $A(\Z/2)=\Z\{1,t_2\}$, $A(\Z/p)=\Z\{1,t_p\}$, $A(D_{2p})=\Z\{1,t_2,t_p,t_{2p}\}$ where $t_i$ denotes the orbit of cardinality $i$. The Burnside Mackey functor $\underline{A}$ is depicted as
$$
\xymatrix@C=20pt@R=30pt{
 & \mathbb{Z} \ar@/^/[dr]^{1\mapsto t_p}\ar@/^/[dl]^{1\mapsto t_2} & \\
\Z\{1,t_2\} \ar@/^/[ur]^{t_2\mapsto 2}\ar@/^/[dr]^{1\mapsto t_p,t_2\mapsto t_{2p}} &  & \Z\{1,t_p\} \ar@/^/[ul]^{t_p \mapsto p}\ar@/^/[dl]^{(1,t_p)\mapsto (t_2,t_{2p})}\\
 & \mathbb{Z}\{1,t_2,t_p,t_{2p}\}\ar@/^/[ul]^{t_p \mapsto 1+\frac{p-1}{2}t_2,t_{2p} \mapsto pt_2}\ar@/^/[ur]^{t_{2p} \mapsto 2t_p} &
}
$$

\vspace{3mm}

Now we compare the associated Mackey functor-valued chain complex for different $S^{\gamma_i}$'s. By remembering the isotropy in the $\underline{\Z}$ case, we simply replace $\mathbb{Z}$ by the corresponding Burnside ring. For example the Mackey functor $\underline{A}_{D_{2p}/\langle \tau \rangle}$ is 
$$
\xymatrix@C=20pt@R=30pt{
 & p\Z \ar@/^/[dr]^{1\mapsto t_p}\ar@/^/[dl]^{1\mapsto t_2} & \\
A(\Z/2)\oplus \frac{p-1}{2}\Z \ar@/^/[ur]^{t_2\mapsto 2}\ar@/^/[dr]^{1\mapsto t_p,t_2\mapsto t_{2p}} &  & \Z\cong A(1) \ar@/^/[ul]^{t_p \mapsto p}\ar@/^/[dl]^{(1,t_p)\mapsto (t_2,t_{2p})}\\
 & A(\Z/2)\ar@/^/[ul]^{t_p \mapsto 1+\frac{p-1}{2}t_2,t_{2p} \mapsto pt_2}\ar@/^/[ur]^{t_{2p} \mapsto 2t_p} &
}
$$

\vspace{3mm}

Since all restriction and transfer maps in the associated Mackey functors are induced from the ones in $\underline{A}$, if we again order equivariant cells carefully, then the maps are the same. The differential is also induced from equivariant differential on the cellular level, so the chain complexes $$\underline{A} \longleftarrow \underline{A}_{D_{2p}/\langle \tau \rangle} \oplus \underline{A}_{D_{2p}/\langle \tau \rangle} \longleftarrow  \underline{A}_{D_{2p}/\langle e \rangle}$$are all equivalent, and we also have periodicity for the Burnside ring coefficients.

\vspace{3mm}

Denote, for $\ell \geq 0$,
\beg{ebbl}{\mathscr{B}_\ell=\widetilde{H}_{*}^{\Z/2}(S^{\ell\alpha},\underline{A}),}
\beg{ebblc}{\mathscr{B}^\ell
=\widetilde{H}^*_{\Z/2}(S^{\ell\alpha},\underline{A}).} Denote by $I_{\Z/2}$ the kernel of the restriction $A(\Z/2)\rightarrow A(1)$, and by $J_{\Z/2}$ the cokernel of the induction $A(1)\rightarrow A(\Z/2)$. Both $I_{\Z/2}$ and $J_{\Z/2}$ are clearly isomorphic to $\Z$. We can repeat the computations via spectral sequences to obtain the following 

\begin{proposition}\label{plbb} (Stong \cite{lewis,hkreal})
For $\ell\geq 0$, we have
$$\mathscr{B}_{\ell,n} =\left\{\begin{array}{ll}J_{\Z/2} & n=0\\
\Z & \text{$n=\ell$ even}\\
\Z/2 & \text{$0< n<\ell$ even}\\
0 & \text{else,}
\end{array}\right.$$
$$\mathscr{B}^{\ell,n} =\left\{\begin{array}{ll} I_{\Z/2} & n=0\\
\Z & \text{$n=\ell$ even}\\
\Z/2 & \text{$3\leq n\leq\ell$ odd}\\
0 & \text{else.}
\end{array}\right. $$
\end{proposition}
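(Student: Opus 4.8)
The plan is to rerun the computation behind Proposition~\ref{plb}, observing that replacing the coefficients $\underline{\Z}$ by $\underline{A}$ perturbs the Bredon chain complex of $S^{\ell\alpha}$ only in its bottom degree, and then to read off the two new low-degree groups from a single transfer/restriction map of Burnside rings.

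\textbf{Setting up.} Everything takes place over the subgroup $\Z/2=\langle\tau\rangle$. For $\ell\geq 1$, $S^{\ell\alpha}$ is the unreduced suspension of the free $\Z/2$-space $S(\ell\alpha)$, and so admits the minimal $\Z/2$-CW structure with two fixed $0$-cells and one free $\Z/2$-cell in each dimension $1\leq k\leq\ell$. The associated reduced Bredon chain complex of Mackey functors $\underline{C}_*(S^{\ell\alpha};\underline{A})$ is then $\underline{A}$ in degree $0$ and the induced functor $\underline{A}_{\Z/2/e}$ in degrees $1,\dots,\ell$; the differentials $\partial_k$ for $k\geq 2$ are induced up from $e$ of the maps $\pm(1\pm\tau)$ on $\Z[\Z/2]$ (with the signs dictated by the orientation rule used in Lemma~\ref{lbred}), and $\partial_1$ is induced by the attaching map of the first free cell. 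Evaluated at $\Z/2/\Z/2$, this complex reads $A(\Z/2)\xleftarrow{\ \mathrm{tr}\ }\Z\xleftarrow{\ 0\ }\Z\xleftarrow{\ \pm 2\ }\Z\xleftarrow{\ 0\ }\cdots$, where $\partial_1$ is, up to sign, the transfer $A(e)=\Z\to A(\Z/2)$, $1\mapsto t_2$; for $\ell=0$ the complex is just $\underline{A}$ concentrated in degree $0$.

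\textbf{Comparison with the constant case.} The key point is that $\underline{A}$ and $\underline{\Z}$ differ only at $\Z/2/\Z/2$, while the induced functors coincide, $\underline{A}_{\Z/2/e}=\mathrm{Ind}_e^{\Z/2}\Z=\underline{\Z}_{\Z/2/e}$, and the differentials are defined by attaching maps, hence coefficient-independent. Thus $\underline{C}_*(S^{\ell\alpha};\underline{A})$ coincides with $\underline{C}_*(S^{\ell\alpha};\underline{\Z})$ in all homological degrees $\geq 1$, and dually its $\underline{\mathrm{Hom}}(-,\underline{A})$-cochain complex coincides with the $\underline{\Z}$-cochain complex in all cohomological degrees $\geq 1$ (with the image of $\delta^0$ unchanged). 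Consequently the collapsing spectral-sequence computation that proves Proposition~\ref{plb} goes through verbatim and yields $\mathscr{B}_{\ell,n}=B_{\ell,n}$ and $\mathscr{B}^{\ell,n}=B^{\ell,n}$ for every $n\geq 1$; only the degree-zero groups change.

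\textbf{The degree-zero groups.} It remains to compute $\mathscr{B}_{\ell,0}=\operatorname{coker}(\partial_1)$ and $\mathscr{B}^{\ell,0}=\ker(\delta^0)$, evaluated at $\Z/2/\Z/2$. Since $\partial_1$ there is $\pm$ the transfer $A(e)\to A(\Z/2)$, $1\mapsto t_2$, its cokernel is $A(\Z/2)/\Z t_2\cong J_{\Z/2}$. Dually, $\delta^0\colon A(\Z/2)\to A(e)=\Z$ is $\pm$ the restriction, $a+bt_2\mapsto a+2b$, whose kernel is $\Z\cdot(t_2-2)\cong I_{\Z/2}$. (For the constant functor these same maps are $\Z\xrightarrow{\ 2\ }\Z$ and $\Z\xrightarrow{\ 1\ }\Z$, producing the $\Z/2$ at $n=0$ in homology and $0$ in cohomology, in accordance with Proposition~\ref{plb}.) Assembling this with Proposition~\ref{plb} gives the stated formulas.

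\textbf{The main obstacle.} There is no essential difficulty; the content is entirely the bookkeeping above. What needs care is (i) checking that replacing $\underline{\Z}$ by $\underline{A}$ genuinely disturbs only the degree-zero entry of the (co)chain complex, so that no new extension problems arise in positive degrees, and (ii) pinning down $\partial_1$ and $\delta^0$—with the signs inherited from Lemma~\ref{lbred} and the correct transfer and restriction in the Burnside ring—so as to identify their cokernel and kernel with the summands $J_{\Z/2}$ and $I_{\Z/2}$.
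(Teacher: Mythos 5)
Your proposal is correct and follows essentially the route the paper intends: the paper offers no details beyond ``repeat the computations via spectral sequences,'' and your cellular chain complex of the minimal $\Z/2$-CW structure on $S^{\ell\alpha}$ is exactly that computation. Your key observations --- that the induced Mackey functor on free cells is the same for $\underline{A}$ and $\underline{\Z}$, so only the degree-zero term changes, and that $\partial_1$ and $\delta^0$ are the transfer and restriction with cokernel $J_{\Z/2}$ and kernel $I_{\Z/2}$ --- are accurate and account for all the stated groups.
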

\qed
\vspace{3mm}

By setting $$\mathscr{B}_{\ell,n}=\mathscr{B}^{-\ell,-n}, \; \mathscr{B}^{\ell,n}=\mathscr{B}_{-\ell,-n}
\;\text{for $\ell<0$}$$ the result extends to $\ell <0$ by Spanier-Whitehead duality. 

Using Lemma \ref{lbred}, similar computations give the results parallel to Proposition \ref{prop31}.

\vspace{3mm}
\begin{proposition}\label{pmgg}
For $m>0$, we have
$$H_*^{D_{2p}}(S(m\gamma),\underline{A})=A(\Z/2) \oplus{}_0A_{m}\oplus \mathscr{B}_m[m-1],$$
$$H^*_{D_{2p}}(S(m\gamma),\underline{A})=A(\Z/2) \oplus {}^0A^{m}\oplus \mathscr{B}^m[m-1].$$
\end{proposition}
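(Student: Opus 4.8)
The plan is to rerun the cellular-filtration spectral sequence that proved Proposition \ref{prop31}, now with coefficients in $\underline{A}$ rather than $\underline{\Z}$, using the periodicity for Burnside coefficients established above (so that we may take $\gamma_i=\gamma$) and the Burnside-ring analogues $\underline{A}_{D_{2p}/\langle\tau\rangle}$, $\underline{A}_{D_{2p}/e}$ of the building-block Mackey functors. We filter $S(m\gamma)$ by its $\Z/p$-skeleta exactly as in Section \ref{sp2}; the subquotients $F_{2k-1}/F_{2k-2}$ and $F_{2k-2}/F_{2k-3}$ are the same $D_{2p}$-spaces as in the $\underline{\Z}$ computation, each stably of the form $D_{2p}\ltimes_{\Z/2}S^{V}$ with $V$ of type $j+j\alpha$ or $j+(j+1)\alpha$.

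First I would compute the $E^1$-page. By the projection formula and the Wirthm\"uller isomorphism, together with the fact that $\mathrm{res}^{D_{2p}}_{\Z/2}\underline{A}=\underline{A}$ (the Burnside Mackey functor restricts to the Burnside Mackey functor), the $\underline{A}$-Bredon homology of a subquotient $D_{2p}\ltimes_{\Z/2}S^{V}$ is $\widetilde{H}^{\Z/2}_*(S^{j\alpha},\underline{A})=\mathscr{B}_j$ up to the relevant shift. Hence the $\underline{A}$-$E^1$-page is obtained from the $\underline{\Z}$-$E^1$-page of Proposition \ref{prop31} by replacing every $B_j[\,\cdot\,]$ with $\mathscr{B}_j[\,\cdot\,]$ and the bottom-edge $\Z$ with $A(\Z/2)$.

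Next I would pin down $d^1$. It is induced by the same $D_{2p}$-equivariant connecting (attaching) maps as in Lemma \ref{lbred}, hence is natural in the coefficient Mackey functor. Reducing along the augmentation $\underline{A}\twoheadrightarrow\underline{\Z}$ recovers precisely the differential of Proposition \ref{prop31}: an isomorphism except for the maps $E^1_{4j,0}\to E^1_{4j-1,0}$, equal to $\Z\xrightarrow{p}\Z$, coming from $dc_k$ with $k$ even; since these involve only the free cells $c_k$, on which $\underline{A}$ and $\underline{\Z}$ agree, the summand ${}_0A_m$ is unchanged. The short exact sequence $0\to\ker(\underline{A}\to\underline{\Z})\to\underline{A}\to\underline{\Z}\to 0$ yields a long exact sequence of spectral sequences relating the $\underline{A}$ case to the $\underline{\Z}$ case and to the one with coefficients in the augmentation-ideal Mackey functor $\ker(\underline{A}\to\underline{\Z})$; the latter is concentrated on the $\Z/2$-isotropy cells $a_{k,0},b_{k,0}$, and its $d^1$, read off from $da_{k,0}=a_{k-1,0}-b_{k-1,0}=db_{k,0}$, is an isomorphism on all stages except the bottom edge and filtration degree $2m-1$ — exactly the $\Z/2$-pattern inside $B_\ell$ recorded in Proposition \ref{plbb}. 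Assembling, and noting as before that there is no room for higher differentials (so the sequence collapses at $E^2$ and the extension problems are trivial), the surviving classes are $A(\Z/2)$ on the bottom edge, ${}_0A_m$ from the exceptional $\Z\xrightarrow{p}\Z$ maps, and $\mathscr{B}_m[m-1]$ on the top edge — which is the homology formula. The cohomology formula follows by dualizing: reverse the arrows, use restriction maps in place of transfers, and replace $J_{\Z/2}$ by $I_{\Z/2}$.

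I expect the main obstacle to be the careful bookkeeping on the few summands where $\underline{A}$ and $\underline{\Z}$ differ: confirming that the $d^1$ along the augmentation-ideal strand is an isomorphism away from the two ends, and excluding nontrivial extensions or higher differentials mixing the $\underline{\Z}$-part with the augmentation-ideal part. This is precisely the place where one must order the equivariant cells so that the transfer and restriction matrices line up, just as in the description of $\underline{A}_{D_{2p}/\langle\tau\rangle}$ and $\underline{A}_{D_{2p}/e}$ above.
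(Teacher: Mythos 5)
Your proposal is correct and follows essentially the same route as the paper, which proves this proposition by rerunning the cellular-filtration spectral sequence of Proposition \ref{prop31} with $\underline{A}$ in place of $\underline{\Z}$, using Proposition \ref{plbb} for the $E^1$-page and Lemma \ref{lbred} for $d^1$. Your additional device of comparing along $0\to\ker(\underline{A}\to\underline{\Z})\to\underline{A}\to\underline{\Z}\to 0$ is a harmless elaboration (the paper instead just notes that only the $\Z/2$-isotropy cells see the difference in coefficients), and your bookkeeping of where the answer changes --- the bottom edge and the $n=0$ line of the top column --- matches the stated result.
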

\qed

To compute suspensions by $\ell \alpha$, we need to see what the attaching fmap $$D_{2p}/(\mathbb{Z}/2)_+ \rightarrow D_{2p}/(\mathbb{Z}/2)_+$$ induces on $\underline{A}$. In terms of $\Z/2$-equivariant stable map $$S^0 \rightarrow D_{2p}/(\Z/2)_+,$$ it induces $$A(\Z/2)\rightarrow A(\Z/2)$$ $$1 \mapsto 1+\frac{p-1}{2}t_2, t_2\mapsto pt_2.$$ Thus, it is injective, and its cokernel is $\Z/p$, just as with $\underline{\Z}$ coefficients. Hence 
\begin{proposition}\label{prop43}
For $m>0$, we have
$$H_*^{D_{2p}}(\Sigma^{\ell\alpha}S(m\gamma)_+,\underline{A})=\mathscr{B}_\ell\oplus \mathscr{B}_{\ell+m}[m-1] \oplus {}_\ell A_{\ell+m}[-\ell],$$
$$H^*_{D_{2p}}(\Sigma^{\ell\alpha}S(m\gamma)_+,\underline{A})=\mathscr{B}^\ell\oplus \mathscr{B}^{\ell+m}[m-1] \oplus {}^\ell A^{\ell+m}[-\ell].$$
\end{proposition}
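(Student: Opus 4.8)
The plan is to repeat, essentially verbatim, the spectral sequence argument that proved Proposition \ref{pmgl}, keeping track of the two points at which the Burnside coefficients differ from $\underline{\Z}$.

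First I would suspend the skeletal filtration of $S(m\gamma)$ by $S^{\ell\alpha}$ and examine the associated homological (and, dually, cohomological) spectral sequence. Using the identifications $F_{2k+1}/F_{2k}\cong D_{2p}\ltimes_{\Z/2}S^{k+(k+1)\alpha}$ and $F_{2k+2}/F_{2k+1}\cong D_{2p}\ltimes_{\Z/2}S^{(k+1)+(k+1)\alpha}$ together with Proposition \ref{plbb}, the $E^1$-page is assembled out of the groups $\mathscr{B}_*$ (resp.\ $\mathscr{B}^*$), shifted exactly as in the proof of Proposition \ref{pmgl}. The only change from the $\underline{\Z}$ computation is that each $\mathscr{B}_\ell$ carries an extra copy of $J_{\Z/2}$ (resp.\ $\mathscr{B}^\ell$ an extra $I_{\Z/2}$) in internal degree $0$, coming from the isotropy-$\Z/2$ cells $a_{k,0},b_{k,0}$; every free cell --- the $c_k$, and the $a_{k,\ell},b_{k,\ell}$ with $\ell>0$ --- contributes exactly as over $\underline{\Z}$, because $\underline{A}$ restricted to the trivial subgroup is $\Z$. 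As in the $\underline{\Z}$ case it is convenient to pass to $\Sigma^{\ell+\ell\alpha}S(m\gamma)_+$, which differs from $\Sigma^{\ell\alpha}S(m\gamma)_+$ by a trivial suspension and whose $E^1$-page is a shift of a juxtaposition of the homological and cohomological $E^1$-pages for $S(m\gamma)$ underlying Proposition \ref{pmgg}.

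Second, the $d^1$ differential is, stably, the $\ell$-independent connecting map of the triad $(F_{2k+2},F_{2k+1},F_{2k})$, i.e.\ a stable $D_{2p}$-map $D_{2p}/(\Z/2)_+\to D_{2p}/(\Z/2)_+$, which by adjunction and the Wirthm\"uller isomorphism is classified by the element $(1,1,\dots,1)$ of \rref{eabbbb}, read off from $da_{k+1,k}$ via Lemma \ref{lbred} exactly as before. On $\underline{A}$ this induces the map $A(\Z/2)\to A(\Z/2)$, $1\mapsto 1+\tfrac{p-1}{2}t_2$, $t_2\mapsto pt_2$ recorded just above: injective, with cokernel $\Z/p$ --- the same behaviour as multiplication by $p$ on $\underline{\Z}$. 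Hence every $d^1$ with nontrivial target is an isomorphism, except for the single map in the bottom filtration degree of the $c_k$-column (where on one side there is no differential with that target, on the other none with that source), whose surviving cokernel contributes a $\Z/p$, just as in the proof of Proposition \ref{pmgl}.

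Finally, as there, dimensional considerations leave no room for higher differentials, so the spectral sequence collapses at $E^2$, and the surviving terms organize into the three summands: $\mathscr{B}_\ell$ from one vertical edge, $\mathscr{B}_{\ell+m}[m-1]$ from the other vertical edge, and ${}_\ell A_{\ell+m}[-\ell]$ from the interior $\Z/p$-cokernels; the cohomological statement follows dually by reversing arrows and passing to restriction maps of Mackey functors, as in Section \ref{sp2}. I expect the only genuine work to be bookkeeping the internal-degree-$0$ terms --- verifying that the extra $J_{\Z/2}$'s (resp.\ $I_{\Z/2}$'s) assemble precisely into the degree-$0$ parts of $\mathscr{B}_\ell$ and $\mathscr{B}_{\ell+m}[m-1]$ and neither support nor receive any new differential. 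This is forced, however, because the connecting maps factor through the $c_k$-columns, which are entirely free and hence behave exactly as over $\underline{\Z}$, so the degree-$0$ parts of the $\mathscr{B}$'s are simply carried along unchanged.
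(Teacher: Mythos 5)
Your argument is correct and follows essentially the same route as the paper: the whole content beyond the $\underline{\Z}$ case is that the connecting map classified by $(1,1,\dots,1)$ induces $A(\Z/2)\to A(\Z/2)$, $1\mapsto 1+\frac{p-1}{2}t_2$, $t_2\mapsto pt_2$, which is injective with cokernel $\Z/p$, so the spectral sequence of Proposition \ref{pmgl} runs unchanged with $\mathscr{B}$ in place of $B$. One small correction to your last paragraph: the interior degree-zero copies of $J_{\Z/2}$ do participate in $d^1$ (the map $1+\frac{p-1}{2}t_2$ is the identity on $J_{\Z/2}=A(\Z/2)/(t_2)$, so these differentials are isomorphisms and the interior copies cancel in pairs), which is exactly why only the two edge copies survive into $\mathscr{B}_\ell$ and $\mathscr{B}_{\ell+m}[m-1]$.
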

\qed
\vspace{3mm}

Finally, to get the (co)homology of $S^{\ell\alpha+m\gamma}$, we look at the cofiber sequence again. Denote 
\beg{ebcl}{\mathscr{C}_\ell=\widetilde{H}_{*}^{D_{2p}}(S^{\ell\alpha},\underline{A}),}
\beg{ebclc}{\mathscr{C}^\ell
=\widetilde{H}^*_{D_{2p}}(S^{\ell\alpha},\underline{A}).} Also denote by $J^{\Z/p}_{D_{2p}}$ the cokernel of 
the induction $A(\Z/p) \rightarrow A(D_{2p})$, and by $I^{\Z/p}_{D_{2p}}$ the kernel of the restriction
$A(D_{2p}) \rightarrow A(\Z/p)$. Both are isomorphic to $\Z\oplus\Z$ as groups. Since the Weyl group of $\Z/2\subset D_{2p}$ is itself and it acts trivially on the 
Burnside ring $A(\Z/p)$, spectral sequence computation shows that

\begin{proposition}\label{plc}
For $\ell\geq 0$, we have
$$\mathscr{C}_{\ell,n} =\left\{\begin{array}{ll}J_{D_{2p}}^{\Z/p} & n=0\\
A(\Z/p) & \text{$n=\ell$ even}\\
A(\Z/p)/2 & \text{$0< n<\ell$ even}\\
0 & \text{else,}
\end{array}\right.$$
$$\mathscr{C}^{\ell,n} =\left\{\begin{array}{ll} I^{\Z/p}_{D_{2p}} & n=0\\
A(\Z/p) & \text{$n=\ell$ even}\\
A(\Z/p)/2 & \text{$3\leq n\leq\ell$ odd}\\
0 & \text{else.}
\end{array}\right. $$
For $\ell<0$, put $$\mathscr{C}_{\ell,n}=\mathscr{C}^{-\ell,-n}, \mathscr{C}^{\ell,n}=\mathscr{C}_{-\ell,-n}.$$
\end{proposition}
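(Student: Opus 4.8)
The plan is to run the cellular spectral sequence exactly as in the proof of Proposition~\ref{plbb}, using that $S^{\ell\alpha}$ is inflated along the quotient $\pi\colon D_{2p}\to D_{2p}/\langle\zeta\rangle\cong\Z/2$ from the reflection sphere $S^{\ell\sigma}$, where $\sigma$ is the sign representation of $\Z/2$ (indeed $\alpha=\pi^{*}\sigma$, since $\alpha$ kills $\zeta$). Inflating the minimal $\Z/2$-CW structure on $S^{\ell\sigma}$ equips $S^{\ell\alpha}$ with a $D_{2p}$-CW structure whose reduced skeleta consist of one $0$-cell of type $D_{2p}/D_{2p}$ together with one cell of type $D_{2p}/\langle\zeta\rangle$ in each dimension $1,\dots,\ell$. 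Filtering $S^{\ell\alpha}$ by these skeleta, the associated homology spectral sequence has $E^{1}$-page concentrated in the row $t=0$: $E^{1}_{0,0}=A(D_{2p})$ (the reduced Bredon homology of $S^{0}$) and $E^{1}_{s,0}=A(\Z/p)$ for $1\le s\le\ell$ (the reduced Bredon homology of $D_{2p}/\langle\zeta\rangle_{+}\wedge S^{s}$, concentrated in degree $s$, using $\underline A(D_{2p}/\langle\zeta\rangle)=A(\langle\zeta\rangle)=A(\Z/p)$). There is therefore no room for differentials beyond $d^{1}$, and $\mathscr C_{\ell}$ is just the homology of the chain complex $(E^{1}_{*,0},d^{1})$.

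The next step is to read off $d^{1}$. Every attaching map of $S^{\ell\alpha}$ is the inflation of the corresponding attaching map of $S^{\ell\sigma}$; these latter are, as always for a reflection sphere, multiplication by $1-\tau$ in even degrees and by $1+\tau$ in odd degrees $\ge 2$, while the lowest one $F_{1}/F_{0}\to\Sigma F_{0}$ is a collapse. On $\underline A$-coefficients (evaluated at $D_{2p}/D_{2p}$) the internal differentials $d^{1}\colon A(\Z/p)\to A(\Z/p)$ thus take the form $1\pm w$, where $w$ is the action of the Weyl group $W_{D_{2p}}(\langle\zeta\rangle)=D_{2p}/\langle\zeta\rangle\cong\Z/2$ on $A(\Z/p)=\underline A(D_{2p}/\langle\zeta\rangle)$; since conjugation by $\tau$ only inverts the generator of $\langle\zeta\rangle$, it fixes the additive basis $\{1,t_{p}\}$ of $A(\Z/p)$, so $w=\mathrm{id}$ and these differentials alternate between $0$ in even degrees and multiplication by $2$ in odd degrees $\ge 3$. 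The bottom differential is the transfer $\mathrm{ind}_{\langle\zeta\rangle}^{D_{2p}}\colon A(\Z/p)\to A(D_{2p})$, $1\mapsto t_{2}$, $t_{p}\mapsto t_{2p}$, which is injective with cokernel $J^{\Z/p}_{D_{2p}}\cong\Z^{2}$. So $\mathscr C_{\ell}$ is the homology of
\begin{equation*}
0\longrightarrow A(\Z/p)\longrightarrow\cdots\longrightarrow A(\Z/p)\xrightarrow{\ 0\ }A(\Z/p)\xrightarrow{\ \mathrm{ind}\ }A(D_{2p})\longrightarrow 0,
\end{equation*}
with $A(D_{2p})$ in homological degree $0$ and internal differentials $0,2,0,2,\dots$ reading from degree $2$ upward.

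Reading off homology then gives: $H_{0}=J^{\Z/p}_{D_{2p}}$; $H_{s}=0$ for every odd $s$, since multiplication by $2$ is injective on $A(\Z/p)\cong\Z^{2}$; $H_{s}=A(\Z/p)/2$ for even $0<s<\ell$; and $H_{\ell}=A(\Z/p)$ if $\ell$ is even (the top differential being $0$) and $H_{\ell}=0$ if $\ell$ is odd. That is the homological formula. The cohomological one follows by dualizing the same complex, replacing the transfer by the restriction $\mathrm{res}\colon A(D_{2p})\to A(\Z/p)$ (which is surjective with kernel $I^{\Z/p}_{D_{2p}}\cong\Z^{2}$) and retaining the alternating $0$ and $(\times 2)$ internal differentials; surjectivity of $\mathrm{res}$ forces $H^{1}=0$, and with the parity shift this is why the odd $\Z/2$-torsion only starts in degree $3$, exactly as for $\mathscr B^{\ell}$. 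The case $\ell<0$ is the stated Spanier--Whitehead reflection. The only step that needs genuine checking is the identification of $d^{1}$: the triviality of the Weyl action on $A(\Z/p)$, and that the bottom map is $\mathrm{ind}_{\langle\zeta\rangle}^{D_{2p}}$ with cokernel $J^{\Z/p}_{D_{2p}}$ (dually, $\mathrm{res}$ with kernel $I^{\Z/p}_{D_{2p}}$); granting these, the computation is line for line the one behind Proposition~\ref{plbb}.
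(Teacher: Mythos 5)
Your argument is correct and is essentially the paper's own: the paper justifies Proposition~\ref{plc} in one sentence by exactly this skeletal (cellular) spectral sequence for the inflated $\Z/2$-CW structure on $S^{\ell\alpha}$, with the key inputs being the triviality of the Weyl group action of $D_{2p}/\langle\zeta\rangle$ on $A(\Z/p)$ (so the internal differentials alternate between $0$ and $2$) and the identification of the bottom differential with $\mathrm{ind}^{D_{2p}}_{\langle\zeta\rangle}$ (dually $\mathrm{res}$), whose cokernel (kernel) is $J^{\Z/p}_{D_{2p}}$ (resp.\ $I^{\Z/p}_{D_{2p}}$). You have simply written out the details the paper leaves implicit, and they check out, including the parity bookkeeping at $n=0$, $n=1$, and $n=\ell$.
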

\qed
\vspace{3mm}

Note that we have short exact sequences $$0\rightarrow \mathscr{B}_\ell \xrightarrow{ind^{\Z/2}_{D_{2p}}} \mathscr{C}_\ell \rightarrow \mathscr{B}_\ell\rightarrow 0,$$ and $$0\rightarrow \mathscr{B}^\ell \rightarrow \mathscr{C}^\ell \xrightarrow{res^{D_{2p}}_{\Z/2}} \mathscr{B}^\ell\rightarrow 0.$$ Therefore

\vspace{3mm}
\begin{theorem}\label{tg1}
For $m>0$, we have
$$H_{*}^{D_{2p}}(S^{m\gamma+\ell\alpha},\underline{A})=\mathscr{B}_\ell
\oplus{}_{\ell-1}A_{\ell+m}[-\ell+1]
\oplus \mathscr{B}_{\ell+m}[m],
$$
$$H^{*}_{D_{2p}}(S^{m\gamma+\ell\alpha},\underline{A})=
\mathscr{B}^\ell\oplus {}^{\ell}A^{\ell+m}[-\ell+1]
\oplus \mathscr{B}^{\ell+m}[m].
$$
\end{theorem}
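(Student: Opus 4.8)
The plan is to mimic exactly the argument used for $\underline{\Z}$ coefficients in Section \ref{sp2}, tracking the one place where the Burnside ring differs from $\Z$. By Proposition \ref{prop43} we already know the reduced (co)homology of $\Sigma^{\ell\alpha}S(m\gamma)_+$ with $\underline{A}$ coefficients, so the only task is to run the long exact sequence associated to the cofiber sequence
$$\Sigma^{\ell\alpha}S(m\gamma)_+\rightarrow S^{\ell\alpha}\rightarrow S^{\ell\alpha+m\gamma}.$$
First I would identify the map $\mathscr{B}_\ell\rightarrow\mathscr{B}_\ell$ (respectively $\mathscr{B}^\ell\rightarrow\mathscr{B}^\ell$) appearing in that sequence: just as in the $\underline{\Z}$ case it is induced by the ``transfer'' attaching datum $D_{2p}/(\Z/2)_+\rightarrow D_{2p}/(\Z/2)_+$, which by the computation already recorded above induces on $\underline{A}$ the map $1\mapsto 1+\frac{p-1}{2}t_2$, $t_2\mapsto pt_2$ on $A(\Z/2)$. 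This map is injective with cokernel $\Z/p$ — precisely the same homological behavior as multiplication by $p$ on $\underline{\Z}$.

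Next I would feed this into the long exact sequence. In homology, the summand $\mathscr{B}_\ell\subset H_*^{D_{2p}}(\Sigma^{\ell\alpha}S(m\gamma)_+,\underline{A})$ maps to $\mathscr{C}_\ell = \widetilde H_*^{D_{2p}}(S^{\ell\alpha},\underline{A})$; using the short exact sequence $0\rightarrow\mathscr{B}_\ell\xrightarrow{\mathrm{ind}}\mathscr{C}_\ell\rightarrow\mathscr{B}_\ell\rightarrow 0$ recorded just before the statement, the map $\mathscr{B}_\ell\rightarrow\mathscr{C}_\ell$ is exactly the inclusion $\mathrm{ind}^{\Z/2}_{D_{2p}}$, hence injective, with cokernel the second copy of $\mathscr{B}_\ell$. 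Therefore this portion of the long exact sequence splits off $\mathscr{B}_\ell$ as a direct summand of $H_*^{D_{2p}}(S^{m\gamma+\ell\alpha},\underline{A})$, and the remaining terms — $\mathscr{B}_{\ell+m}[m-1]$ and ${}_\ell A_{\ell+m}[-\ell]$ from Proposition \ref{prop43} — each get shifted up by one in the connecting homomorphism, exactly as in the proof of Theorem \ref{t1}. The shift of ${}_\ell A_{\ell+m}[-\ell]$ to ${}_{\ell-1}A_{\ell+m}[-\ell+1]$ is bookkeeping about the index ranges in \rref{eal}, identical to the $\underline{\Z}$ case. The cohomological statement is dual: there one uses $0\rightarrow\mathscr{B}^\ell\rightarrow\mathscr{C}^\ell\xrightarrow{\mathrm{res}}\mathscr{B}^\ell\rightarrow 0$, the restriction maps are surjective so no extension problem arises, and one reads off $H^*_{D_{2p}}(S^{m\gamma+\ell\alpha},\underline{A})=\mathscr{B}^\ell\oplus{}^{\ell}A^{\ell+m}[-\ell+1]\oplus\mathscr{B}^{\ell+m}[m]$.

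The main obstacle — really the only non-formal point — is verifying that the attaching map $D_{2p}/(\Z/2)_+\rightarrow D_{2p}/(\Z/2)_+$ induces $1\mapsto 1+\frac{p-1}{2}t_2$, $t_2\mapsto pt_2$ on $A(\Z/2)$ rather than some other element of $A(\Z/2)\oplus\Z^{\oplus(p-1)/2}$. This is already asserted in the excerpt (it is forced by Lemma \ref{lbred}, specifically the coefficient pattern $-(1+\sigma)c_{k-1}+(-1)^{k-2}\sigma\tau c_{k-1}$ in $da_{k,k-1}$, read through the Wirthm\"uller identification $D_{2p}/(\Z/2)\simeq_{\Z/2}\ast\sqcup\coprod_{(p-1)/2}(\Z/2)$), so for the proof I would simply cite that computation and the injectivity-with-$\Z/p$-cokernel conclusion drawn from it. A secondary subtlety is checking that the extension $0\rightarrow\mathscr{B}_\ell\rightarrow\mathscr{C}_\ell\rightarrow\mathscr{B}_\ell\rightarrow 0$ is the relevant one and that the image of $\mathscr{B}_\ell$ under the long exact sequence map really is all of $\mathrm{ind}^{\Z/2}_{D_{2p}}(\mathscr{B}_\ell)$; this follows because on $n=0$ the transfer datum realizes exactly the induction $A(\Z/2)\hookrightarrow A(D_{2p})$ restricted appropriately, and in positive even degrees it is reduction mod $p$ on the $\Z/2$-summands, which matches the $\mathscr{B}\to\mathscr{C}\to\mathscr{B}$ pattern degree by degree. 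Everything else is the same diagram chase already carried out for $\underline{\Z}$.
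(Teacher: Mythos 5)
Your overall strategy is the same as the paper's (the cofiber sequence, Proposition \ref{prop43}, and the short exact sequences relating $\mathscr{B}_\ell$ and $\mathscr{C}_\ell$), but there is a genuine gap at the step you dismiss as ``bookkeeping.'' The passage from ${}_{\ell}A_{\ell+m}[-\ell+1]$ to ${}_{\ell-1}A_{\ell+m}[-\ell+1]$ is not a re-indexing: by \rref{eal} these two groups differ by an extra $\Z/p$ in degree $\ell$ whenever $\ell$ is even. In the proof of Theorem \ref{t1} that extra $\Z/p$ has a concrete source --- it is the cokernel of the map $B_{\ell,\ell}\to B_{\ell,\ell}$, i.e.\ of $\Z\xrightarrow{p}\Z$ on the top class --- so it is produced by the cofiber-sequence map itself, not by the degree shift. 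In the Burnside setting the corresponding map is $ind^{\Z/2}_{D_{2p}}\colon\mathscr{B}_\ell\to\mathscr{C}_\ell$, whose cokernel you correctly identify as $\mathscr{B}_\ell$, and $\mathscr{B}_{\ell,\ell}=\Z$ is torsion-free; no analogous $\Z/p$ is produced. Carried out honestly, your long exact sequence therefore yields $\mathscr{B}_\ell\oplus{}_{\ell}A_{\ell+m}[-\ell+1]\oplus\mathscr{B}_{\ell+m}[m]$, which is not the stated formula. You must either locate the missing $\Z/p$ in degree $\ell$ or conclude that it is absent; as written, the proof silently converts one answer into the other.

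A concrete test case shows the discrepancy cannot be waved away: take $m=1$, $\ell=0$. The reduced cellular complex of $S^{\gamma}$ with $\underline{A}$-coefficients, evaluated at $D_{2p}/D_{2p}$, is $A(D_{2p})\leftarrow A(\Z/2)^{\oplus 2}\leftarrow A(e)$, where each summand of the middle term maps by $\pm$ the induction $1\mapsto t_p$, $t_2\mapsto t_{2p}$; hence $\widetilde{H}^{D_{2p}}_0(S^{\gamma},\underline{A})=A(D_{2p})/\Z\{t_p,t_{2p}\}\cong\Z^{2}$, with no $p$-torsion, whereas the stated formula contains the summand $({}_{-1}A_{1}[1])_0=\Z/p$. (For $\underline{\Z}$ the same computation gives $\mathrm{coker}\bigl(\Z^{\oplus 2}\xrightarrow{(p,\,p)}\Z\bigr)=\Z/p$, which is exactly why the left index drops from $\ell$ to $\ell-1$ in Theorem \ref{t1}.) A secondary, related slip: in your first paragraph you identify the map in the long exact sequence with the attaching map $D_{2p}/(\Z/2)_+\to D_{2p}/(\Z/2)_+$; that map is the $d^1$ of the spectral sequence already consumed in proving Proposition \ref{prop43}, not the map induced by $\Sigma^{\ell\alpha}S(m\gamma)_+\to S^{\ell\alpha}$, which is the induction $\mathscr{B}_\ell\to\mathscr{C}_\ell$. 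Keeping these two maps separate is precisely what exposes the problem above.
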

\qed

\vspace{10mm}


\begin{thebibliography}{99}

\bibitem{bohoso} A.Bohmann, A.Osorno: Constructing equivariant spectra via categorial Mackey functors, {\em Algebraic and Geometric Topology} 15 (2015), no. 1, 537-563

\bibitem{bredon} G.Bredon: {\em Equivariant cohomology theories}, Lecture Notes Math. 34, Springer Verlag, 1967

\bibitem{greenlees} J.P.C.Greenlees: Some remarks on projective Mackey functors, {\em J. Pure Appl. Algebra} 
81 (1992) 17-38

\bibitem{hhr} M. A. Hill, M. J. Hopkins, D. C. Ravenel:  
On the nonexistence of elements of Kervaire invariant one, 
{\em Annals of Mathematics} Volume 184 (2016), Issue 1, 1-262

\bibitem{hok} J.Holler, I.Kriz: On $RO(G)$-graded equivariant ``ordinary" cohomology where $G$ is 
a power of $\Z/2$, {\em Alg. Geom. Top.} 17 (2017) 741-763

\bibitem{hok2} J.Holler, I.Kriz 2: Coefficients of $(\Z/p)^n$-equivariant geometric fixed points of $H\Z/p$,
preprint 2017,
www.math.lsa.umich.edu/$\sim$ikriz

\bibitem{hk} P.Hu, I.Kriz: Coefficients of the constant Mackey functor over cyclic $p$-groups, preprint, 2010

\bibitem{hkreal} P. Hu, I. Kriz: Real-oriented homotopy theory and an analogue of the 
Adams-Novikov spectral sequence, {\em Topology} 40(2) (2001) 317--399

\bibitem{sk} S.Kriz: Equivariant cohomology and the super reciprocal plane of a hyperplane arrangement, preprint, 2015

\bibitem{lewis} L.G.Lewis Jr.:
The $RO(G)$-graded equivariant ordinary cohomology of complex projective spaces with linear $\Z/p$-actions, 
{\em Algebraic topology and transformation groups (G\"{o}ttingen, 1987)}, 53-122,
Lecture Notes in Math., 1361, Math. Gottingensis, Springer, Berlin, 1988

\bibitem{lms} L. G. Lewis, Jr., J. P. May, M. Steinberger, and J. E. McClure: 
{\em Equivariant stable homotopy theory}, Vol 1213 of Lecture Notes in Mathematics,
Springer-Verlag, Berlin, 1986

\bibitem{mm} G.Lewis, J.P.May, J.McClure: Ordinary $RO(G)$-graded cohomology, {\em Bull. AMS} 4 (1981) 
208-212

\bibitem{maytalk} J.P.May: Glimpses of equivariant algebraic topology, {\em colloquium talk at the University of Michigan}, 2017



\bibitem{stong} R.E.Stong, private communication to J.P.May, 1982

\end{thebibliography}
\end{document}